\newcommand{\Section}[1]{
   \stepcounter{section}
   \bigskip\noindent
   {\large\bf\hbox{\thesection~~}#1}\par
   \nopagebreak
   \medskip
   \renewcommand{\theequation}{\thesection.\arabic{equation}}
   \setcounter{equation}{0}
   \setcounter{subsection}{0}
}
\newtheorem{thm}{Theorem}[section]
\newtheorem{cor}{Corollary}[section]
\newtheorem{lem}{Lemma}[section]
\newtheorem{rem}{Remark}[section]
\newcommand{\be}{\begin{equation}}\newcommand{\ee}{\end{equation}}
\newcommand{\bes}{\begin{equation*}}\newcommand{\ees}{\end{equation*}}
\newcommand{\ba}{\begin{array}}\newcommand{\ea}{\end{array}}
\newcommand{\ben}{\begin{eqnarray}}\newcommand{\een}{\end{eqnarray}}
\newcommand{\bn}{\begin{eqnarray*}}\newcommand{\en}{\end{eqnarray*}}
\begin{document}
\title{\Large \bf Counting Polynomials with Distinct Zeros in Finite Fields
\thanks{Research  is supported
  in part by 973 Program (2013CB834203),  National Natural Science Foundation of China under Grant No.61202437 and 11471162, in part by Natural Science Basic Research Plan in Shaanxi Province of China under Grant No.2015JM1022 and Natural Science Foundation of the Jiangsu Higher Education Institutes of China under Grant No.13KJB110016.}}
\author{\small Haiyan Zhou$^a$\ \ Li-Ping Wang$^b$\ \ Weiqiong Wang$^{c\dag}$\\ \small $a.$ School of Mathematics, Nanjing Normal University, Nanjing 210023, China\\
\small Email: haiyanxiaodong@gmail.com\\
\small $b.$ Institute of Information Engineering, Chinese Academy of Sciences
Beijing 100093, China\\
\small Email: wangliping@iie.ac.cn\\
\small $c.$ School of Science,
Chan'an University, Xi'an 710064, China \\ \small Email:wqwang@chd.edu.cn}
\date{}
 \maketitle

\noindent\rule[2mm]{\textwidth}{.01pt}
 {\footnotesize \noindent{\bf Abstract} Let $\mathbb{F}_q$ be a finite field with $q=p^e$ elements, where $p$ is a prime and $e\geq 1$ is an integer. Let $\ell<n$ be two positive integers.  
Fix a monic polynomial $u(x)=x^n +u_{n-1}x^{n-1}+\cdots +u_{\ell+1}x^{\ell+1} \in \mathbb{F}_q[x]$ of degree $n$ and 
consider all degree $n$ monic polynomials of the form 
$$f(x) = u(x) + v_\ell(x), \ v_\ell(x)=a_\ell x^\ell+a_{\ell-1}x^{\ell-1}+\cdots+a_1x+a_0\in \mathbb{F}_q[x].$$
For integer $0\leq k \leq {\rm min}\{n,q\}$,  
let $N_k(u(x),\ell)$ denote the total number of $v_\ell(x)$ such that $u(x)+v_\ell(x)$ has exactly $k$ distinct roots in $\mathbb{F}_q$, i.e. 
$$N_k(u(x),\ell)=|\{f(x)=u(x)+v_l(x)\ |\ f(x)\ {\rm has\ exactly}\ k\ {\rm distinct\ zeros\ in}\ \mathbb{F}_q\}|.$$ In this paper, we obtain explicit combinatorial formulae for $N_k(u(x),\ell)$ when $n-\ell$ is small, namely when $n-\ell= 1, 2, 3$. As an application, we define 
two kinds of Wenger graphs called jumped Wenger graphs and obtain their explicit spectrum. 

 \vskip .3cm
\noindent{\bf Key words } Polynomials, Inclusion-Exclusion Principal, Moments Subset-Sum, Distinct Coordinate Sieve, Spectrum of Graphs }

   \noindent\rule[2mm]{\textwidth}{.1pt}

\Section{Introduction}

Let $\mathbb{F}_q$ be a finite field with $q=p^e$ elements, where $p$ is a prime and $e\geq 1$ is an integer. Let $\ell<n$ be two positive integers.  
Fix a monic polynomial $u(x)=x^n +u_{n-1}x^{n-1}+\cdots +u_{\ell+1}x^{\ell+1} \in \mathbb{F}_q[x]$ of degree $n$ and 
consider all degree $n$ monic polynomials of the form 
$$f(x) = u(x) + v_\ell(x), \ v_\ell(x)=a_\ell x^\ell+a_{\ell-1}x^{\ell-1}+\cdots+a_1x+a_0\in \mathbb{F}_q[x].$$
We are interesting in the number of distinct roots in $\mathbb{F}_q$ of $f(x)$ as the lower degree part $v_\ell(x)$ varies. 
Since $a^q = a$ for all $a \in \mathbb{F}_q$, we can reduce the polynomial $u(x)$ modulo $x^q-x$. In this way, we can and will assume that $n<q$. 

It is clear that $f(x)$ has at most $n$ distinct zeros in $\mathbb{F}_q$. For integer $0\leq k \leq n$,  
let $N_k(u(x),\ell)$ denote the total number of $v_\ell(x)$ such that $u(x)+v_\ell(x)$ has exactly $k$ distinct roots in $\mathbb{F}_q$, i.e. 
$$N_k(u(x),\ell)=|\{f(x)=u(x)+v_l(x)\ |\ f(x)\ {\rm has\ exactly}\ k\ {\rm distinct\ zeros\ in}\ \mathbb{F}_q\}|.$$
Understanding $N_k(u(x),\ell)$ is  an important number theoretical problem with a wide range of applications. 
For example, in coding theory, $v_\ell(x)$ represents a code word in the $[q, \ell+1]_q$ Reed-Solomon codes and $u(x)$ represents a received word. 
The number $N_k(u(x),\ell)$ is then  the number of code words whose distance to the received word $u(x)$ is precisely $q-k$. 
Determining the largest $k$ such that $N_k(u(x), \ell)>0$ is equivalent to computing the error distance from the received 
word $u(x)$ to the code, which is the most important problem in decoding Reed-Solomon codes, see \cite{KW}\cite{LiW}\cite{ZW} for various partial results.  
In the special case $k=n$ (the polynomial $f(x)$ splits as a product of $n$ distinct linear factors), 
the possible large size for $N_n(u(x),\ell)$ is the key to prove several complexity results in decoding primitive Reed-Solomon codes \cite{CW1} and in approximating the minimum distance 
of linear codes \cite{CW2}. 
For another example, in graph theory, $N_k(u(x), \ell)$ represents the multiplicity of 
certain eigenvalue in an important class of algebraic graphs extending the classical Wenger graph, see \cite{CLWWW}. Deciding this multiplicity is a difficult 
problem in general. 

Mathematically, the number $N_k(u(x),\ell)$ becomes increasingly more complicated as $n-\ell$ grows. Thus, we cannot expect an explicit formula for  $N_k(u(x),\ell)$ if $n-\ell$ is large. 
In the simplest case $n-\ell=1$,   A. Knopfmacher and J. Knopfmacher derived an explicit combinatorial formula for $N_k(u(x),\ell)$ in $\cite{AK90}$.   
Using this formula,  S.M. Cioab\u{a}, F. Lazebnik and W. Li obtained the explicit spectrum of Wenger graphs  in $\cite{CL14}$.  
In this paper, we obtain explicit combinatorial formulae for $N_k(u(x),\ell)$ when $n-\ell$ is small, namely when $n-\ell= 1, 2, 3$. In the case $n-\ell=1$,  we give a simple proof as a simple application of the inclusion-exclusion principal. In the case $n-\ell=2$, we use the inclusion-exclusion principe 
together with the subset sum result in \cite{LW1}. In the case $n-\ell=3$, it is more complicated. When $k=n$, this is an extension of the subset sum problem up to $2$ moments(called Moments Subset-Sum with parameter $2$). For a fixed $d\geq 1$, the Moments Subset-Sum with parameter $d$ is formally defined as follows, see \cite{GGG}:

\noindent{\bf Moments\  Subset-Sum( MSS(d)):} Given a set $A=\{a_1,\cdots,a_n\}$, $a_i\in\mathbb{F}_q$, integer $t$, and $m_1, \cdots, m_d\in\mathbb{F}_q$, decide if there exists a  subset $S\subseteq A$  of size $t$, satisfying  $\sum\limits_{a\in S}a^i=m_i$  for all $1\leq i\leq d$.

Note that MSS(1) is the usual subset sum problem and it is well-known for the NP-hardness of subset sum problem. However, it turns out to be much more difficult to prove NP-hardness for MSS(d) for $d\geq 2$. In 2015, V. Gandikota, B. Ghazi and E. Grigorescu proved the NP-hardness for MSS(d) for $d=2, 3$, see \cite{GGG}. Surprisingly, when the degree of the extension $\mathbb{F}_q/\mathbb{F}_p$ is even, $A=\mathbb{F}_q,\ t=n,\ m_i=0$ and $d=2$, we obtain an explicit combinatorial formula for the number of $S$, i.e., Theorem 4.1, employing the more advanced sieving formula 
from $\cite{LW2}$ together with results on quadratic equations over finite fields. Finally, we get explicit combinatorial formulae for $N_k(x^n, n-3)$ using the inclusion-exclusion principal 
together with Theorem 4.1. As an application, we define 
two kinds of Wenger graphs called jumped Wenger graphs and obtain their explicit spectrum. 
Note that in the cases $n-\ell =2, 3$, the smallest $k$ such that $N_k(u(x), \ell)>0$ is determined in \cite{LZ} by giving an explicit construction 
of a solution.  Our result gives an exact and explicit formula for $N_k(u(x), \ell)$. 

\Section{The case $n-\ell=1$}

In this simplest case,  using the generating function over an additive arithmetical semigroup,  A. Knopfmacher and J. Knopfmacher obtained an explicit combinatorial formula for $N_k(u(x),\ell)$ in $\cite{AK90}$.  Here, we would give the simple proof according to the  classical inclusion-exclusion principal.  We recall it briefly. 

Let $S$ be a finite set of objects and let $P_1,\ P_2,\ \cdots,\ P_m$ be $m$ properties referring to the objects in $S$. Let $I\subseteq\{P_1,\ \cdots,\ P_m\}$. Define $S_\emptyset=S$ and $S_I=\{x\in S|\ x\ {\rm satifies\ all\ properties\ in}\ I\}$ for $I\neq \emptyset$. For any non-negative integer $j$, we put $S_j=\{x\in S|\ x\ {\rm satifies\ exactly}\ j\ {\rm properts\ of}\ \{P_1,\ \cdots,\ P_m\}\}$. It is well-known that the  classical inclusion-exclusion principal implies 
$$|S_j|=\sum\limits_{|I|=j}|S_I|-\sum\limits_{|I|=j+1}|S_I|-\cdots+(-1)^{n-j}\sum\limits_{|I|=n}|S_I|,\ {\rm where }\ j=0.$$
It is worth mentioning that the above formula doesn't work for $j\geq 1$. For example, let $S$ be the set consisting of all monic polynomials over $\mathbb{F}_2$ with degree $3$, $P_1$ the property that the monic polynomial in $S$ has a zero $0$, and $P_2$ the property that the monic polynomial in $S$ has a zero $1$. Then it is easy to compute $|S_1|=4$ and $\sum\limits_{|I|=1}|S_I|-\sum\limits_{|I|=2}|S_I|=6$.

\begin{thm}\label{thm:n-l=1}

$$N_k(x^n,n-1)=q^{n-k}\left(\begin{array}{c}
q\\k
\end{array}\right)\sum\limits\limits_{i=0}^{n-k}(-1)^i\left(\begin{array}{c}
q-k\\i
\end{array}\right)q^{-i}.$$
\end{thm}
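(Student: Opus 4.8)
The plan is to realize $N_k(x^n,n-1)$ as a purely combinatorial count and then apply the $j=0$ inclusion-exclusion principle exactly as recalled above. Since $v_{n-1}(x)$ ranges over all polynomials of degree at most $n-1$, the polynomial $f(x)=x^n+v_{n-1}(x)$ ranges over the set $S$ of all $q^n$ monic polynomials of degree $n$ over $\mathbb{F}_q$; thus $N_k(x^n,n-1)$ is simply the number of monic degree-$n$ polynomials having exactly $k$ distinct roots in $\mathbb{F}_q$. (The same argument shows this count is in fact independent of the fixed upper part $u(x)$, so stating it for $u(x)=x^n$ loses no generality.)

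First I would fix the root set. For a subset $T\subseteq\mathbb{F}_q$ with $|T|=k$, let $M_k$ denote the number of monic degree-$n$ polynomials whose set of distinct $\mathbb{F}_q$-roots is exactly $T$. By the symmetry of $\mathbb{F}_q$ this number does not depend on the choice of $T$, so $N_k(x^n,n-1)=\binom{q}{k}M_k$, and it remains to evaluate $M_k$.

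To compute $M_k$ I would factor out the forced roots. Writing $g_T(x)=\prod_{\alpha\in T}(x-\alpha)$, every $f\in S$ divisible by $g_T$ is uniquely $f=g_T\cdot h$ with $h$ monic of degree $n-k$, and conversely; this is a bijection onto the set $S'$ of the $q^{\,n-k}$ monic polynomials of degree $n-k$. Because $g_T$ already supplies all of $T$ as roots and contributes no root outside $T$, the root set of $f$ equals $T$ precisely when $h$ has no root in $\mathbb{F}_q\setminus T$ (roots of $h$ lying inside $T$ are harmless, since only distinct roots are counted). Hence, taking on $S'$ the $q-k$ properties $Q_\beta$ (``$h(\beta)=0$'') indexed by $\beta\in\mathbb{F}_q\setminus T$, the quantity $M_k$ is exactly $|S'_0|$, the case $j=0$ of the recalled principle.

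Finally I would plug in the subset counts and simplify. For $J\subseteq\mathbb{F}_q\setminus T$ with $|J|=t$, a monic $h$ of degree $n-k$ satisfies all $Q_\beta$, $\beta\in J$, iff $\prod_{\beta\in J}(x-\beta)\mid h$, which holds for exactly $q^{\,n-k-t}$ choices of $h$ (and for none once $t>n-k$, the divisor then exceeding $\deg h$); thus $\sum_{|J|=t}|S'_J|=\binom{q-k}{t}q^{\,n-k-t}$. The $j=0$ formula then gives $M_k=\sum_{t=0}^{n-k}(-1)^t\binom{q-k}{t}q^{\,n-k-t}=q^{\,n-k}\sum_{t=0}^{n-k}(-1)^t\binom{q-k}{t}q^{-t}$, and multiplying by $\binom{q}{k}$ yields the claimed identity (with $i=t$). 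The only real care is in the root-set reduction---checking that $f$ has root set exactly $T$ iff $h$ avoids $\mathbb{F}_q\setminus T$, so that the problem becomes a genuine $j=0$ instance---together with the truncation of the sum at $t=n-k$; the remaining algebra is immediate, and this reduction is precisely what lets us bypass the (incorrect) naive alternating sum for $k\geq1$ flagged above.
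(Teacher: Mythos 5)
Your proof is correct and takes essentially the same route as the paper's: fix the $k$-element root set, factor $f=\prod_{\alpha\in J}(x-\alpha)\cdot h$ with $h$ monic of degree $n-k$, and apply the $j=0$ inclusion--exclusion principle to $h$ over the properties ``$h(\beta)=0$'' for $\beta$ outside the root set, with $\sum_{|J|=t}|S'_J|=\binom{q-k}{t}q^{n-k-t}$ and vanishing beyond $t=n-k$. The only cosmetic difference is that you pull out the symmetry factor $\binom{q}{k}$ at the start rather than summing over all $k$-subsets $J$ as the paper does, and you are somewhat more explicit about why the reduction to the cofactor makes this a legitimate $j=0$ instance.
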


\begin{proof}Let $f(x)= x^n +a_1x^{n-1} +a_2x^{n-2}+\cdots +a_n$ and $c_1, c_2, \cdots, c_k$ be $k$ distinct roots of $f(x)$ in $\mathbb{F}_q$. Then there exists a polynomial 
$g(x)\in \mathbb{F}_q[x]$ such that 
$f(x)=(x-c_1)(x-c_2)\cdots(x-c_k)g(x)$.
For a fixed $k$-subset $J=\{c_1, \cdots, c_k\}\subset \mathbb{F}_q$ 
and a subset $I \subseteq \mathbb{F}_q -J$, we define the set 
$$S_I(J)=\left\{g(x)\in \mathbb{F}_q[x]|\ g(\alpha)=0 \ {\rm for } \ {\rm all} \ \alpha\in I \right\}.$$ 
For $0\leq |I| <n-k$, it is obvious to obtain that the cardinality of the set $S_I(J)$ is $q^{n-k-|I|}$. It follows that for $0\leq i<n-k$, 
$$\sum_{I\subseteq \mathbb{F}_q-J, |I|=i} |S_I(J)| = {q-k \choose i}q^{n-k-i}.$$ 
For $|I|>n-k$, it is clear that $|S_I(J)|=0$. 
By the inclusion-exclusion principle,  we deduce that 
$$\begin{array}{ll}N_k(x^n,n-1)&= \sum\limits_{|J|=k} \sum\limits_{I \subset \mathbb{F}_q-J} (-1)^{|I|} |S_I(J)|\\ 
&=\sum\limits_{i=0}^{n-k}(-1)^i  \sum\limits_{|J|=k} \sum\limits_{I \subset \mathbb{F}_q-J, |I|=i} |S_I(J)| \\
&=q^{n-k}\left(\begin{array}{c}
q\\k
\end{array}\right)\sum\limits\limits_{i=0}^{n-k}(-1)^i\left(\begin{array}{c}
q-k\\i
\end{array}\right)q^{-i}\end{array}.$$

\end{proof}

\begin{rem}\label{rem:remark1}
If the degree of $f(x)= x^n +a_1x^{n-1} +a_2x^{n-2}+\cdots +a_n$ is greater than $q-1$, i.e., $n\geq q$, then by the Euclidean division,  there exist $g(x), h(x)\in \mathbb{F}_q[x]$  such that $f(x)= (x^q-x)g(x)+h(x)$, where $g(x)$ is the monic polynomial with degree $n-q$ and the degree of $h(x)$ is less than $q$ or $h(x)=0$. If $f(x)$ has exactly $k\leq q-1$ distinct roots in $\mathbb{F}_q$, then  $h(x)$ has also exactly $k$ distinct roots in $\mathbb{F}_q$. So we obtain 
$$N_k(x^n,n-1)=q^{n-q}(q-1)\sum\limits_{r=k}^{q-1}q^{r-k}\left(\begin{array}{c}
q\\k
\end{array}\right)\sum\limits\limits_{i=0}^{r-k}(-1)^i\left(\begin{array}{c}
q-k\\i
\end{array}\right)q^{-i}.$$ 
Since $$\begin{array}{l}\sum\limits_{r=k}^{q-1}q^{r-k}\sum\limits\limits_{i=0}^{r-k}(-1)^i\left(\begin{array}{c}
q-k\\i
\end{array}\right)q^{-i}\\
\hskip 4cm =1+q\sum\limits_{i=0}^1(-1)^i{q-k \choose i}q^{-i}+\cdots+q^{q-1-k}\sum\limits_{i=0}^{q-1-k}(-1)^i{q-k \choose i}q^{-i}\\
\hskip 4cm =\frac{1}{1-q}\sum\limits_{i=0}^{q-k-1}(-1)^i{q-k \choose i}(1-q^{q-k-i})\\
\hskip 4cm =(q-1)^{q-k-1},
\end{array}$$ we have $$N_k(x^n,n-1)={q \choose k}q^{n-q}(q-1)^{q-k}.$$ 
If $k=q$, then $h(x)=0$. Then $N_k(x^n,n-1)=q^{n-q}$. Hence for $n\geq q$, $$N_k(x^n,n-1)={q \choose k}q^{n-q}(q-1)^{q-k}.$$ 
\end{rem}

\Section{The case $n-\ell=2$}

In the special case that $k=n$, this is the counting version for the $n$-subset sum problem over $\mathbb{F}_q$, 
which is already handled in \cite{LW1}. We state this result as a lemma and will use it in our proof. 

\begin{lem}(See $\cite{LW1}$)\label{lem:LW}
For $b\in \mathbb{F}_q$, let $M(n,b)$ be the number of $n$-subsets of $\mathbb{F}_q$ whose elements sum to b. If $p\nmid n$, then $$M(n,b)=\frac{1}{q}\left(\ba{c}q\\n\ea\right).$$ 
If $p|n$, then 
$$M(n,b)=\frac{1}{q}\left(\ba{c}q\\n\ea\right)+(-1)^{n+\frac{n}{p}}\frac{v(b)}{q}\left(\ba{c}q/p\\n/p\ea\right),$$ 
where $v(b)=-1$ if $b\neq 0$, and $v(b)=q-1$ if $b=0$.
\end{lem}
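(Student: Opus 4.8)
The plan is to prove this counting result via the additive characters of $\mathbb{F}_q$, which is the cleanest route. Write $\psi$ for a generic additive character, $\psi_0$ for the trivial one, and use the orthogonality relation $\frac{1}{q}\sum_{\psi}\psi(x)$, which equals $1$ if $x=0$ and $0$ otherwise, to detect the constraint $\sum_{a\in S}a=b$. First I would write
$$M(n,b)=\sum_{\substack{S\subseteq\mathbb{F}_q\\ |S|=n}}\frac{1}{q}\sum_{\psi}\psi\Big(\sum_{a\in S}a-b\Big)=\frac{1}{q}\sum_{\psi}\psi(-b)\sum_{\substack{S\subseteq\mathbb{F}_q\\ |S|=n}}\prod_{a\in S}\psi(a),$$
and observe that the inner sum over $n$-subsets is exactly the $n$-th elementary symmetric function $e_n$ of the multiset of values $\{\psi(a):a\in\mathbb{F}_q\}$, i.e.\ the coefficient of $T^n$ in the generating product $\prod_{a\in\mathbb{F}_q}(1+T\psi(a))$.

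Next I would compute this generating product character by character. For the trivial character $\psi_0$ all values equal $1$, so the product is $(1+T)^q$ and $e_n=\binom{q}{n}$, contributing $\frac{1}{q}\binom{q}{n}$ to $M(n,b)$ for every $b$. For a nontrivial $\psi$, the key structural fact is that its image is the full group $\mu_p$ of complex $p$-th roots of unity and that, since $\ker\psi$ has index $p$, each element of $\mu_p$ is attained by exactly $q/p$ field elements. Hence the product factors as $\big(\prod_{\zeta\in\mu_p}(1+T\zeta)\big)^{q/p}$, and evaluating $\prod_{\zeta\in\mu_p}(1+T\zeta)=1-(-1)^pT^p$ from $\prod_{\zeta\in\mu_p}(X-\zeta)=X^p-1$ gives $\big(1-(-1)^pT^p\big)^{q/p}$. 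Extracting the coefficient of $T^n$ then shows $e_n=0$ unless $p\mid n$, in which case $e_n=(-1)^{n+n/p}\binom{q/p}{n/p}$.

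Finally I would reassemble the character sum. When $p\nmid n$ only the trivial character survives, yielding $M(n,b)=\frac{1}{q}\binom{q}{n}$. When $p\mid n$, summing the nontrivial contributions introduces the factor $\sum_{\psi\neq\psi_0}\psi(-b)$, which equals $q-1$ when $b=0$ and $-1$ when $b\neq0$ by orthogonality, that is, precisely $v(b)$; this produces the stated second term $(-1)^{n+n/p}\frac{v(b)}{q}\binom{q/p}{n/p}$.

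I expect the main obstacle to be the nontrivial-character computation: establishing that the character values are equidistributed over $\mu_p$ so that the product collapses to a $(q/p)$-th power, and then carefully tracking the signs in $\prod_{\zeta\in\mu_p}(1+T\zeta)$ and in the binomial expansion so as to land on the exact exponent $(-1)^{n+n/p}$. The case $p=2$ deserves a separate sanity check, since there one has $1-(-1)^pT^p=1-T^2$ rather than $1+T^p$.
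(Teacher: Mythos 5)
Your proof is correct and complete, and it is worth noting that the paper itself offers no proof of this lemma at all --- it simply imports it from \cite{LW1} --- so your argument supplies a genuinely self-contained derivation rather than an alternative to an internal one. Every step checks: the orthogonality detection of $\sum_{a\in S}a=b$; the identification of $\sum_{|S|=n}\prod_{a\in S}\psi(a)$ with the elementary symmetric function $e_n$ of the multiset $\{\psi(a)\}$, i.e.\ the coefficient of $T^n$ in $\prod_{a\in\mathbb{F}_q}(1+T\psi(a))$; the equidistribution of a nontrivial $\psi$ over $\mu_p$ (the fibers are the $q/p$ cosets of $\ker\psi$, and the image is all of $\mu_p$ because $p$ is prime); the identity $\prod_{\zeta\in\mu_p}(1+T\zeta)=1-(-1)^pT^p$, which follows by substituting $X=-1/T$ into $X^p-1$ and clearing denominators; and the sign bookkeeping $\bigl(-(-1)^p\bigr)^{n/p}=(-1)^{n/p}(-1)^{n}=(-1)^{n+n/p}$. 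Your flagged worry about $p=2$ dissolves: a nonzero coefficient forces $n$ even, and then $(-1)^{n+n/2}=(-1)^{n/2}$, which is exactly the coefficient produced by $(1-T^2)^{q/2}$, so the general sign formula already covers that case. The final step $\sum_{\psi\neq\psi_0}\psi(-b)=v(b)$ is standard orthogonality. For perspective, the case $p\nmid n$ also admits a one-line symmetry proof that bypasses characters: the translation $x\mapsto x+c$ maps $n$-subsets to $n$-subsets and shifts the sum by $nc$, which ranges over all of $\mathbb{F}_q$ as $c$ does, so the $q$ counts $M(n,b)$ are all equal to $\frac{1}{q}\binom{q}{n}$; but this shortcut says nothing when $p\mid n$, whereas your character computation handles both cases uniformly and is essentially the mechanism behind the general subset-sum formulas of \cite{LW1} (where, for $D=\mathbb{F}_q^*$, Gauss sums replace the exact evaluation you used here).
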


In terms of our earlier notations, we have $M(n, b) = N_n(x^n -bx^{n-1}, n-2)$.

\begin{thm}\label{thm:a1=0}
(i)\ If $p\nmid n$, then 

$$N_k(x^n-bx^{n-1},n-2)=q^{n-k-1}\left(\begin{array}{c}
q\\k
\end{array}\right)\sum\limits\limits_{i=0}^{n-k}(-1)^i\left(\begin{array}{c}
q-k\\i
\end{array}\right)q^{-i}.$$

(ii)\ If $p\mid n$, then  $$\begin{array}{rl}N_k(x^n -b x^{n-1},n-2)=&q^{n-k-1}\left(\begin{array}{c}
q\\k
\end{array}\right)\sum\limits\limits_{i=0}^{n-k}(-1)^i\left(\begin{array}{c}
q-k\\i
\end{array}\right)q^{-i}\\
&+(-1)^{\frac{n}{p}+n}\frac{v(b)}{q}\left(\begin{array}{c}
n\\k
\end{array}\right)
\left(\begin{array}{c}
q/p\\n/p
\end{array}\right)
\end{array}.$$

\end{thm}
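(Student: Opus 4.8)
The plan is to run exactly the inclusion--exclusion scheme of Theorem~\ref{thm:n-l=1}, but to carry along the single extra linear constraint imposed by fixing the coefficient of $x^{n-1}$. First I would fix a $k$-subset $J=\{c_1,\dots,c_k\}\subseteq\mathbb{F}_q$ and write any monic $f$ that vanishes on $J$ as $f(x)=\prod_{c\in J}(x-c)\,g(x)$ with $g$ monic of degree $n-k$. Expanding the product, the coefficient of $x^{n-1}$ in $f$ equals $g_{n-k-1}-\sum_{c\in J}c$, where $g_{n-k-1}$ is the subleading coefficient of $g$. Since our family forces this coefficient to be $-b$, the polynomial $g$ is no longer free: its subleading coefficient is \emph{pinned} to $g_{n-k-1}=\big(\sum_{c\in J}c\big)-b$, and only the remaining $n-k-1$ coefficients of $g$ are free. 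This one pin is the entire difference from the case $n-\ell=1$.

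Next I would compute $|S_I(J)|$ for $I\subseteq\mathbb{F}_q-J$. Factoring out the prescribed extra roots as $g(x)=\prod_{\alpha\in I}(x-\alpha)\,h(x)$ with $h$ monic of degree $n-k-|I|$, the pin on $g_{n-k-1}$ translates into a pin on the subleading coefficient of $h$. Hence for $|I|<n-k$ one gets $|S_I(J)|=q^{\,n-k-1-|I|}$, uniformly in the particular $I$; for $|I|=n-k$ the factor $g$ is forced to equal $\prod_{\alpha\in I}(x-\alpha)$, and the pin becomes the subset-sum condition $\sum_{\alpha\in I}\alpha=b-\sum_{c\in J}c$, so $|S_I(J)|\in\{0,1\}$ according to whether this holds; and for $|I|>n-k$ it vanishes. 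Summing over $I$ of each size and applying the inclusion--exclusion identity precisely as in Theorem~\ref{thm:n-l=1} yields
\begin{equation*}
\begin{aligned}
N_k(x^n-bx^{n-1},n-2)={}&q^{\,n-k-1}\binom{q}{k}\sum_{i=0}^{n-k-1}(-1)^i\binom{q-k}{i}q^{-i}\\
&+(-1)^{\,n-k}\sum_{|J|=k}\#\Big\{I\subseteq\mathbb{F}_q-J:\ |I|=n-k,\ \sum_{\alpha\in I}\alpha=b-\sum_{c\in J}c\Big\}.
\end{aligned}
\end{equation*}

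The key step is to recognize the final double sum as a subset-sum count. A pair $(J,I)$ with $|J|=k$, $|I|=n-k$, $J\cap I=\emptyset$ and $\sum_{c\in J}c+\sum_{\alpha\in I}\alpha=b$ is exactly an $n$-subset $T=J\sqcup I$ of $\mathbb{F}_q$ with $\sum_{t\in T}t=b$, together with a choice of which $k$ of its elements form $J$. Thus the double sum equals $\binom{n}{k}M(n,b)$, and I would invoke Lemma~\ref{lem:LW} to evaluate $M(n,b)$. Using the identity $\binom{n}{k}\binom{q}{n}=\binom{q}{k}\binom{q-k}{n-k}$, the main term $\tfrac1q\binom{q}{n}$ of $M(n,b)$ contributes precisely the missing $i=n-k$ summand, extending the range to $\sum_{i=0}^{n-k}$ and giving part~(i) when $p\nmid n$; when $p\mid n$, the extra term of $M(n,b)$ produces the correction term, whose sign is the product of the sieve sign $(-1)^{n-k}$ and the sign $(-1)^{n+\frac np}$ from Lemma~\ref{lem:LW}. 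The main obstacle is conceptual rather than computational: one must see that the top-degree layer $|I|=n-k$ of the sieve is \emph{exactly} an $n$-subset-sum count, so that Lemma~\ref{lem:LW} applies; the two routine verifications are that the pin on the subleading coefficient of $h$ leaves exactly $q^{\,n-k-1-|I|}$ choices independently of $I$, and the bookkeeping that folds the main part of $M(n,b)$ back into the binomial sum.
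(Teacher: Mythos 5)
Your proposal is correct and follows essentially the same route as the paper's own proof: the same inclusion--exclusion sieve over pairs $(J,I)$ with the subleading coefficient of $g$ pinned by $b$, the same identification of the $|I|=n-k$ layer as the $n$-subset-sum count $\binom{n}{k}M(n,b)$ evaluated via Lemma~\ref{lem:LW}, and the same identity $\binom{n}{k}\binom{q}{n}=\binom{q}{k}\binom{q-k}{n-k}$ to absorb the main term into the $i=n-k$ summand. Your factor-out-$h$ justification that $|S_I(J)|=q^{\,n-k-1-|I|}$ is a slightly more explicit version of the coefficient-comparison argument the paper gives, but it is the same idea.
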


\begin{proof} Let $c_1, c_2, \cdots, c_k$ be $k$ distinct roots of $f(x)$ in $\mathbb{F}_q$. Then there exists a polynomial 
$$g(x)=x^{n-k}+d_1x^{n-k-1}+\cdots+d_{n-k-1}x+d_{n-k}\in \mathbb{F}_q[x]$$
such that 
$$
\ba{ll}
f(x)&= x^n -bx^{n-1} +a_2x^{n-2}+\cdots +a_n \\
&=(x-c_1)(x-c_2)\cdots(x-c_k)g(x)\\
 &=(x^k+b_1x^{k-1}+\cdots+b_{k-1}x+b_k)(x^{n-k}+d_1x^{n-k-1}+\cdots+d_{n-k-1}x+d_{n-k}).
\ea $$
Comparing the coefficients, we have 
$$
\left\{
\ba{ccc}
b_1+d_1&=&-b\\
b_2+b_1d_1+d_2&=&a_2\\
b_3+b_2d_1+b_1d_2+d_3&=&a_3\\
\cdots&\cdots&\cdots\\
b_kd_{n-k}=a_n
\ea
\right.
$$
For fixed $b$ and fixed $k$-subset $J=\{c_1, \cdots, c_k\}\subset \mathbb{F}_q$,  the coefficient $d_1=-(b+c_1+c_2+\cdots+c_k)$ is then fixed. The other coefficients $\{ d_2, \cdots, d_{n-k}\}$ of the polynomial $g(x)$ 
are free since $\{ a_2,\cdots, a_n\}$ are free. 

For a subset $I \subseteq \mathbb{F}_q -J$, define the set 
$$\ba{ll}S_I(J)=&\left\{g(x)=x^{n-k}+d_1x^{n-k-1}+\cdots + d_{n-k} \in \mathbb{F}_q[x]|d_1=b-(c_1+c_2+\cdots+c_k), \right.\\ & \left.g(\alpha)=0 \ {\rm for } \ {\rm all} \ \alpha\in I \right\}.\ea$$ 
For $0\leq |I| <n-k$, the above argument shows that the cardinality of the set $S_I(J)$ is $q^{n-k-1-|I|}$. It follows that for $0\leq i<n-k$, 
$$\sum_{I\subseteq \mathbb{F}_q-J, |I|=i} |S_I(J)| = {q-k \choose i}q^{n-k-1-i}.$$ 
When $i=n-k$, $f(x)$ is forced to have $n$ distinct roots in $\mathbb{F}_q$ with sum equal to $b$. Then by Lemma \ref{lem:LW}, we deduce 

$$
\sum\limits_{|J|=k}\sum\limits_{I\subseteq \mathbb{F}_q-J, |I|=n-k} |S_I(J)|=\left\{\begin{array}{l}
\left(\begin{array}{c}
n\\k
\end{array}\right)\frac{1}{q}\left(\begin{array}{c}
q\\n
\end{array}\right), p\nmid n,\\
\left(\begin{array}{c}
n\\k
\end{array}\right)\left[\frac{1}{q}\left(\begin{array}{c}
q\\n
\end{array}\right)+(-1)^{n+\frac{n}{p}}\frac{v(b)}{q}\left(\begin{array}{c}
q/p\\n/p
\end{array}\right)\right], p\mid n, 
\end{array}
\right.
$$
For $|I|>n-k$, it is clear that $|S_I(J)|=0$. 
By the inclusion-exclusion principle,  we deduce that 
$$N_k(x^n-bx^{n-1},n-2)= \sum_{|J|=k} \sum_{I \subset \mathbb{F}_q-J} (-1)^{|I|} |S_I(J)|  
=\sum_{i=0}^{n-k}(-1)^i  \sum_{|J|=k} \sum_{I \subset \mathbb{F}_q-J, |I|=i} |S_I(J)| .$$
Hence,
$$
N_k(x^n -bx^{n-1},n-2)=\left\{\begin{array}{l}

\left(\begin{array}{c}
q\\k
\end{array}\right)\sum\limits\limits_{i=0}^{n-k-1}(-1)^i\left(\begin{array}{c}
q-k\\i
\end{array}\right)q^{n-k-1-i}+
\\(-1)^{n-k}\left(\begin{array}{c}
n\\k
\end{array}\right)\frac{1}{q}\left(\begin{array}{c}
q\\n
\end{array}\right), p\nmid n,\\

\left(\begin{array}{c}
q\\k
\end{array}\right)
\sum\limits\limits_{i=0}^{n-k-1}(-1)^i
\left(\begin{array}{c}
q-k\\i
\end{array}\right)q^{n-k-1-i}+
\\(-1)^{n-k}\left(\begin{array}{c}
n\\k
\end{array}\right)
\left[\frac{1}{q}\left(\begin{array}{c}
q\\n
\end{array}\right)+(-1)^{n+\frac{n}{p}}\frac{v(b)}{q}\left(\begin{array}{c}
q/p\\n/p
\end{array}\right)\right], p\mid n.
\end{array}
\right.
$$

This Theorem is proved from the fact $\left(\begin{array}{c}
q\\k
\end{array}\right)\left(\begin{array}{c}
q-k\\n-k
\end{array}\right)=\left(\begin{array}{c}
n\\k
\end{array}\right)\left(\begin{array}{c}
q\\n
\end{array}\right)$.

\end{proof}

\begin{rem}\label{rem:remark2}
For $n\geq q$, we can deduce the formula of $N_k(x^n-bx^{n-1}, n-2)$.

$1)$ If $n>q$, then $n-1>q-1$. Similar arguments to those used in the Remark $\ref{rem:remark1}$ show that 
$$N_k(x^n-bx^{n-1}, n-2)=q^{n-q-1}{q \choose k}(q-1)^{q-k}.$$

$2)$ If $n=q$, then $f(x)=x^q-x-bx^{q-1}+a_2x^{q-2}+\cdots+a_{q-2}x^2+(a_{q-1}+1)x+a_q$. It is easy to get the following conclusions:

When $b\neq 0$, $$
N_k(x^q-bx^{q-1}, q-2)=\left\{\begin{array}{ll}0& k=q,\\ \frac{1}{q} {q \choose k}((q-1)^{q-k}-(-1)^{q-k})& k\leq q-1.\end{array}\right.$$

When $b=0$, 
$$
N_k(x^q-bx^{q-1}, q-2)=\left\{\begin{array}{ll}1& k=q,\\ 0& k=q-1,\\ \frac{q-1}{q} {q \choose k}((q-1)^{q-k-1}+(-1)^{q-k})& k\leq q-2.\end{array}\right.$$
\end{rem}

\Section{The case $n-\ell=3$}

In this section, we always assume that $q$ is an odd number. Let $\mathbb{F}_q^n$ be the Cartesian product of $n$ copies of $\mathbb{F}_q$. For convenience, we firstly state some results on the number of common solutions in $\mathbb{F}_q^n$ of the equations

\be\label{def:equations}
\Bigg\{
\begin{array}{lll}
a_1x_1^2+\cdots+a_nx_n^2&=&a_0,\\
b_1x_1+\cdots+b_nx_n&=&b_0,
\end{array}
\ee
where $a_0, b_0, b_1,\cdots, b_n\in\mathbb{F}_q$, $a_1,\cdots, a_n\in\mathbb{F}_q^*$, $b_i\neq 0$ for at least one $i$, $1\leq i\leq n,$ see Exercises 6.31-6.34 in $\cite{Lind}$.

\begin{lem}\label{lem:equations}
Denote by $N(n, a_0,b_0)$ the number of common solutions in $\mathbb{F}_q^n$ of the equations $\eqref{def:equations}$. Put $a=a_1a_2\cdots a_n$, $b=b_1^2a_1^{-1}+\cdots+b_n^2a_n^{-1}$, $c=b_0^2-a_0b$. Then

$i)$ For $b\neq 0,\ c=0,$ $$
N(n, a_0, b_0)=\{\begin{array}{ll}
q^{n-2}& {\rm if}\ n\ {\rm even},\\
q^{n-2}+q^{(n-3)/2}(q-1)\chi((-1)^{(n-1)/2}ab)& {\rm if}\ n\ {\rm odd},
\end{array}$$
where $\chi$ is the quadratic character of $\mathbb{F}_q.$

$ii)$ For $b\neq 0,\ c\neq 0,$ $$
N(n, a_0, b_0)=\{\begin{array}{ll}
q^{n-2}+q^{(n-2)/2}\chi((-1)^{n/2}ac)& {\rm if}\ n\ {\rm even},\\
q^{n-2}-q^{(n-3)/2}\chi((-1)^{(n-1)/2}ab)& {\rm if}\ n\ {\rm odd}.
\end{array}$$

$iii)$ For $b=c=0,$ $$
N(n, a_0, b_0)=\{\begin{array}{ll}
q^{n-2}+v(a_0)q^{(n-2)/2}\chi((-1)^{n/2}a)& {\rm if}\ n\ {\rm even},\\
q^{n-2}+q^{(n-1)/2}\chi((-1)^{(n-1)/2}a_0a)& {\rm if}\ n\ {\rm odd},
\end{array}$$
where $v$ is as in Lemma \ref{lem:LW}.

$iv)$ For $b=0,\ c\neq 0$, $N(n, a_0, b_0)=q^{n-2}$.

\end{lem}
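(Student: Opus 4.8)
The plan is to evaluate $N(n,a_0,b_0)$ directly by the additive-character method, reducing everything to quadratic Gauss sums. Fix a nontrivial additive character $\psi$ of $\mathbb{F}_q$ and write $G=\sum_{y\in\mathbb{F}_q}\chi(y)\psi(y)$ for the associated quadratic Gauss sum, so that $\sum_{y}\psi(cy^2)=\chi(c)G$ for $c\neq 0$ and $G^2=\chi(-1)q$. Using orthogonality twice to detect the two equations in \eqref{def:equations}, one gets
\[
N(n,a_0,b_0)=\frac{1}{q^2}\sum_{s,t\in\mathbb{F}_q}\psi(-sa_0-tb_0)\prod_{i=1}^n\Big(\sum_{x\in\mathbb{F}_q}\psi(sa_ix^2+tb_ix)\Big).
\]
First I would isolate the term $s=0$. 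There the inner product is $\prod_i\sum_x\psi(tb_ix)$, which vanishes for $t\neq 0$ because some $b_i\neq 0$, and equals $q^n$ for $t=0$; hence $s=0$ contributes exactly the main term $q^{n-2}$.

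For $s\neq 0$, each coordinate sum is a Gauss sum with a linear term; completing the square gives $\sum_x\psi(sa_ix^2+tb_ix)=\chi(sa_i)\,G\,\psi\big(-t^2b_i^2/(4sa_i)\big)$. Multiplying over $i$ and writing $a=a_1\cdots a_n$, $b=\sum_i b_i^2a_i^{-1}$ collapses the product to $\chi(s)^n\chi(a)G^n\psi(-t^2b/(4s))$, so the $s\neq 0$ part becomes
\[
\frac{\chi(a)G^n}{q^2}\sum_{s\neq 0}\chi(s)^n\psi(-sa_0)\sum_{t\in\mathbb{F}_q}\psi\Big(-\tfrac{b}{4s}t^2-b_0t\Big).
\]
The inner $t$-sum is where the cases split. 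When $b\neq 0$ it is again a Gauss sum; completing the square in $t$ produces a factor $\chi(-b/(4s))G$ and replaces the exponent by $s\,c/b$ with $c=b_0^2-a_0b$, leaving $\frac{\chi(a)\chi(-b/4)G^{n+1}}{q^2}\sum_{s\neq 0}\chi(s)^{n+1}\psi(cs/b)$. When $b=0$ the $t$-sum degenerates to $\sum_t\psi(-b_0t)$, which is $q$ if $b_0=0$ and $0$ otherwise; since then $c=b_0^2$, this separates cases iii) and iv) immediately.

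The final step is to evaluate $\sum_{s\neq 0}\chi(s)^{m}\psi(\beta s)$ for $m\in\{n,n+1\}$ according to the parity of $n$: for even exponent it is $q-1$ or $-1$ depending on whether $\beta=0$, and for odd exponent it is $0$ when $\beta=0$ and the Gauss sum $\chi(\beta)G$ when $\beta\neq 0$. Substituting these, together with $G^2=\chi(-1)q$ to reduce the powers of $G$ to powers of $q$, yields each of the four cases. I expect the only real work to be the bookkeeping in this last step: tracking the accumulated powers $G^{n+1}$ or $G^{n+2}$, and simplifying the quadratic characters of the constants $-b/4$, $cb$ and $-a_0$ (using that $4$ is a square and $\chi$ has order two) so that the signs line up with the stated normal forms $\chi((-1)^{(n\pm1)/2}ab)$, $\chi((-1)^{n/2}ac)$ and $v(a_0)\chi((-1)^{n/2}a)$. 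The degenerate behaviour of the $t$-sum when $b=0$ is the one place requiring separate handling rather than a uniform formula.
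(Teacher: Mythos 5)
Your proposal is correct, and the bookkeeping you deferred does close: in the $b\neq 0$, $c\neq 0$, $n$ even case the extra term is $\chi(a)\chi(-b/4)\chi(cb)\,G^{n+2}/q^2$, and since $\chi(4)=\chi(b)^2=1$ and $G^{n+2}=\chi(-1)^{(n+2)/2}q^{(n+2)/2}$ this collapses to $q^{(n-2)/2}\chi((-1)^{n/2}ac)$ as required; in the $b=c=0$, $n$ even case the $s$-sum $\sum_{s\neq 0}\psi(-sa_0)$ is exactly $v(a_0)$, and the odd cases work out the same way (a sanity check at $n=1$: case i) gives $q^{-1}\bigl(1+(q-1)\chi(a_1b_1^2a_1^{-1})\bigr)=1$ and case ii) gives $0$, matching the direct count). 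You should know, however, that the paper supplies no proof to compare against: the lemma is quoted verbatim from Exercises 6.31--6.34 of Lidl--Niederreiter \cite{Lind}, where the intended route is different — one solves the linear equation for some variable with $b_i\neq 0$, substitutes into the diagonal quadric, and invokes the standard point counts for (possibly degenerate) quadratic forms in $n-1$ variables from Chapter 6 of \cite{Lind}; in that picture the invariants $b$ and $c$ record the rank and type of the restricted form, which is why they govern the case split. Your double orthogonality plus Gauss sums argument is a legitimate self-contained alternative: it derives all four cases from a single identity, with the only genuine branch point being the degeneration of the inner $t$-sum at $b=0$ (precisely where the substitution method sees a rank drop), at the cost of the sign bookkeeping that the reduction route hides inside the quoted theorems on quadrics. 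Given that the paper assumes $q$ odd throughout Section 4, your implicit use of $2^{-1}$ and $\chi$ of order two is justified, so there is no gap.
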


Now, we begin to recall a sieve for distinct coordinate counting, see $\cite{LW2}$. Let $X$ be a subset of $\mathbb{F}_q^n$. Motivated by diverse applications in coding theory and graph theory, it is very interesting to count the number of elements in the set
$$\overline{X}=\{(x_1, \cdots, x_n)\in X|x_i\neq x_j,\ \forall i\neq j\}.$$ In $\cite{LW2}$, J. Li and D. Wan discovered the new sieving formula about $|\overline{X}|$.
Let $S_n$ be the symmetric group on $\{1,2,\cdots,n\}$. For a given permutation $\tau=(i_1i_2\cdots i_{t_1})\cdots(l_1l_2\cdots l_{t_s})$ with $t_i\geq 1,\ 1\leq i\leq s$, define $$X_\tau=\{(x_1, \cdots, x_n)\in X|x_{i_1}=\cdots=x_{i_{t_1}},\ x_{l_1}=\cdots=x_{l_{t_s}}\}.$$ Now the symmetric group $S_n$ acts on $\mathbb{F}_q^n$ by permuting coordinates. That is, for given $\tau\in S_n$ and $x=(x_1, \cdots, x_n)\in \mathbb{F}_q^n$, we have $$\tau\circ x=(x_{\tau(1)}, \cdots, x_{\tau(n)})\in X.$$ $X$ is called symmetric if any $x\in X$ and any $\tau\in S_n$, $\tau\circ x\in X$. Furthermore, If $X$ satisfies the "strongly symmetric" condition, that is, for any $\tau$ and $\sigma$ in $S_n$, one has $|X_{\tau}|=|X_{\sigma}|$ provided $l(\tau)$ and $l(\sigma)$, then we call X a strongly symmetric set. Let $C_n$ be the set of conjugacy classes of $S_n$. If $X$ is symmetric, then 
$$|\overline{X}|=\sum\limits\limits_{\tau\in C_n}(-1)^{n-l(\tau)}C(\tau)|X_\tau|,$$
where $C(\tau)$ is the number of permutations conjugate to $\tau$ and $l(\tau)$ is the number of cycles including the trivial cycle.

 A permutation $\tau\in S_n$ is said to be of type $(c_1, c_2, \cdots, c_n)$ if $\tau$ has exactly $c_i$
 cycles of length $i$. We denote by $N(c_1, c_2, \cdots, c_n)$  the number of permutations in $S_k$ of type $(c_1, c_2, \cdots, c_n)$ and we have (see $\cite{S97}$),
 $$N(c_1, c_2, \cdots, c_n)=\frac{n!}{1^{c_1}{c_1}!2^{c_2}c_2!\cdots n^{c_n}c_n!}.$$
 Since two permutations are conjugate if and only if they have the same type, we have $C(\tau)=N(c_1, c_2, \cdots, c_n).$

\begin{lem}\label{lem:S}
Put
$$S_+(n)=\sum\limits_{{\sum\limits_{i=1}^nic_i=n},\ {\sum\limits_{p\nmid i}c_i}\ {\rm is\ even}}N(c_1, c_2, \cdots, c_n)\prod_{p|i}(-q)^{c_i}\prod_{p\nmid i}(-\sqrt{q})^{c_i},$$ 
$$S_{-}(n)=\sum\limits_{{\sum\limits_{i=1}^nic_i=n},\ {\sum\limits_{p\nmid i}c_i}\ {\rm is\ odd}}N(c_1, c_2, \cdots, c_n)\prod_{p|i}(-q)^{c_i}\prod_{p\nmid i}(-\sqrt{q})^{c_i}.$$
Then 

 $$S_+(n)=
\frac{n!}{2}((-1)^n\alpha(n)+\beta(n)),$$

$$S_-(n)=
\frac{n!}{2}((-1)^n\alpha(n)-\beta(n)),$$
where $$\alpha(n)=\sum\limits_{i+pj=n, 0\leq i\leq \sqrt{q}}\left(\begin{array}{c}\sqrt{q}\\i\end{array}\right)\left(\begin{array}{c}\frac{q-\sqrt{q}}{p}\\ j\end{array}\right)$$ and $$\beta(n)=\sum\limits_{i+pj=n, i\geq 0}(-1)^j\left(\begin{array}{c}\sqrt{q}-1+i\\ \sqrt{q}-1\end{array}\right)\left(\begin{array}{c}\frac{q+\sqrt{q}}{p}\\ j\end{array}\right).$$
\end{lem}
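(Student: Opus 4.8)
The plan is to evaluate the two constrained cycle-type sums by first removing the parity constraint, replacing it with an auxiliary sign, and then applying the exponential (cycle-index) formula for the symmetric group. Write $T(n)$ for the unconstrained sum $\sum_{\sum ic_i=n}N(c_1,\dots,c_n)\prod_{p\mid i}(-q)^{c_i}\prod_{p\nmid i}(-\sqrt q)^{c_i}$, and let $U(n)$ be the same sum but with the factor $(-\sqrt q)^{c_i}$ on the cycles with $p\nmid i$ replaced by $(\sqrt q)^{c_i}$. Since $(\sqrt q)^{c_i}=(-1)^{c_i}(-\sqrt q)^{c_i}$, the weight of each type in $U(n)$ differs from its weight in $T(n)$ by the sign $(-1)^{\sum_{p\nmid i}c_i}$, which is exactly the quantity whose parity defines $S_+$ and $S_-$. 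Hence $T(n)=S_+(n)+S_-(n)$ and $U(n)=S_+(n)-S_-(n)$, so it suffices to compute $T(n)$ and $U(n)$ and set $S_\pm(n)=\tfrac12\bigl(T(n)\pm U(n)\bigr)$.

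Next I would pass to exponential generating functions. By the standard identity $\sum_{n\ge0}\frac{x^n}{n!}\sum_{\sum ic_i=n}N(c_1,\dots,c_n)\prod_i w(i)^{c_i}=\exp\bigl(\sum_{i\ge1}\frac{w(i)}{i}x^i\bigr)$ together with the splittings $\sum_{i\ge1}\frac{x^i}{i}=-\log(1-x)$ and $\sum_{p\mid i}\frac{x^i}{i}=-\frac1p\log(1-x^p)$, the exponent for $T$ collapses to $\sqrt q\,\log(1-x)+\frac{q-\sqrt q}{p}\log(1-x^p)$, giving the EGF $(1-x)^{\sqrt q}(1-x^p)^{(q-\sqrt q)/p}$; likewise the EGF for $U$ is $(1-x)^{-\sqrt q}(1-x^p)^{(q+\sqrt q)/p}$. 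At this point I would invoke the even-extension hypothesis on $\mathbb{F}_q/\mathbb{F}_p$: it makes $\sqrt q=p^{e/2}$ an integer with $p\mid\sqrt q$, so that $\sqrt q$ and $(q\pm\sqrt q)/p$ are all non-negative integers and the ordinary binomial theorem applies to each factor.

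Finally I would extract $[x^n]$. Expanding $(1-x)^{\sqrt q}$ and $(1-x^p)^{(q-\sqrt q)/p}$ and collecting the terms with $i+pj=n$ gives $T(n)=n!\sum_{i+pj=n}(-1)^{i+j}\binom{\sqrt q}{i}\binom{(q-\sqrt q)/p}{j}$; because $q$, hence $p$, is odd, the relation $i+pj=n$ forces $i+j\equiv n\pmod 2$, so $(-1)^{i+j}=(-1)^n$ and $T(n)=n!(-1)^n\alpha(n)$. For $U$ I would use $(1-x)^{-\sqrt q}=\sum_{i\ge0}\binom{\sqrt q-1+i}{\sqrt q-1}x^i$ together with the binomial expansion of $(1-x^p)^{(q+\sqrt q)/p}$, which yields $U(n)=n!\beta(n)$ directly. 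Substituting these into $S_\pm(n)=\tfrac12\bigl(T(n)\pm U(n)\bigr)$ gives the two claimed formulas.

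The one genuinely delicate point is the sign bookkeeping. The decomposition into $T$ and $U$ is the device that turns the awkward parity constraint on $\sum_{p\nmid i}c_i$ into the clean average $\tfrac12(T\pm U)$, and after extracting coefficients one must check that the residual sign $(-1)^{i+j}$ in $T(n)$ really does reduce to $(-1)^n$, which is precisely where the oddness of $q$ enters. Confirming that the binomial parameters are integers, via the even-degree hypothesis, is the other place where the standing assumptions are used. Beyond these two observations the computation is a routine application of the cycle-index exponential formula and the binomial theorem.
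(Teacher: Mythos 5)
Your proposal is correct and takes essentially the same route as the paper: the paper likewise evaluates the two specializations $C_n(-\sqrt q,\dots,-q,\dots)$ and $C_n(\sqrt q,\dots,-q,\dots)$ of the cycle-index exponential generating function, obtaining the factorizations $(1-u)^{\sqrt q}(1-u^p)^{(q-\sqrt q)/p}$ and $(1-u)^{-\sqrt q}(1-u^p)^{(q+\sqrt q)/p}$, and recovers $S_\pm(n)$ as the half-sum and half-difference of the two coefficient extractions, which is exactly your $T$/$U$ averaging device. Your explicit verifications that $(-1)^{i+j}=(-1)^n$ when $i+pj=n$ with $p$ odd, and that $\sqrt q$ and $(q\pm\sqrt q)/p$ are integers under the even-extension hypothesis, are the same facts the paper uses implicitly.
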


\begin{proof} 

Define the generating function$$C_n(t_1, t_2,\cdots, t_n)=\sum\limits_{\sum\limits_{i=1}^nic_i=n}N(c_1,c_2,\cdots, c_n)t_1^{c_1}t_2^{c_2}\cdots t_n^{c_n}.$$ Then we get the following exponential generating function$$\sum\limits_{n\geq 0}C_n(t_1, t_2,\cdots, t_n)\frac{u^n}{n!}=e^{ut_1+u^2\frac{t_2}{2}+u^3\frac{t_3}{3}+\cdots}.$$ For given generating function $f(x)$, denote by $[x^i]f(x)$ the coefficient of $x^i$ in the formal power series expansion of $f(x)$. 

$1)$ If $t_i=-\sqrt{q}$ for $p\nmid i$ and $t_i=-q$ for $p|i$, then we have

$\begin{array}{l}
C_n(-\sqrt{q},\cdots,-\sqrt{q},-q,-\sqrt{q},\cdots,-\sqrt{q},-q,\cdots)\\
\hskip 4cm=[\frac{u^n}{n!}]e^{-\sqrt{q}(u+\frac{u^2}{2}+\frac{u^3}{3}+\cdots)+\frac{-q+\sqrt{q}}{p}(u^p+\frac{u^{2p}}{2}+\frac{u^{3p}}{3}+\cdots)}\\
 \hskip 4cm=[\frac{u^n}{n!}]e^{\sqrt{q}\ln(1-u)+\frac{q-\sqrt{q}}{p}\ln(1-u^p)}\\
\hskip 4cm=[\frac{u^n}{n!}](1-u)^{\sqrt{q}}(1-u^p)^{\frac{q-\sqrt{q}}{p}}\\
\hskip 4cm=[\frac{u^n}{n!}](\sum\limits_{i\geq 0}(-1)^i\left(\begin{array}{c}\sqrt{q}\\i\end{array}\right)u^i)(\sum\limits_{j\geq 0}(-1)^j\left(\begin{array}{c}\frac{q-\sqrt{q}}{p}\\j\end{array}\right)u^{pj})\\
\hskip 4cm=n!\sum\limits_{i+pj=n,0\leq i\leq \sqrt{q}}(-1)^n\left(\begin{array}{c}\sqrt{q}\\ i\end{array}\right)\left(\begin{array}{c}\frac{q-\sqrt{q}}{p}\\ j\end{array}\right).
\end{array}$

Similarly, if $t_i=\sqrt{q}$ for $p\nmid i$ and $t_i=-q$ for $p|i$, then we have 
$$C_n(\sqrt{q},\cdots,\sqrt{q},-q,\sqrt{q}, \cdots)=n!\sum\limits_{i+pj=n, i\geq 0}(-1)^{j}\left(\begin{array}{c}\sqrt{q}-1+i\\ \sqrt{q}-1\end{array}\right)\left(\begin{array}{c}\frac{q+\sqrt{q}}{p}\\ j\end{array}\right).$$

Thus,  this lemma is proved from 
$$S_+(n)=\frac{C_n(-\sqrt{q},\cdots,-\sqrt{q},-q,-\sqrt{q},\cdots)+C_n(\sqrt{q},\cdots,\sqrt{q},-q,\sqrt{q},\cdots)}{2},$$
$$S_-(n)=\frac{C_n(-\sqrt{q},\cdots,-\sqrt{q},-q,-\sqrt{q},\cdots)-C_n(\sqrt{q},\cdots,\sqrt{q},-q,\sqrt{q},\cdots)}{2}.$$

\end{proof}

\begin{lem}\label{lem:CHAR}
Let $q=p^{r}$ with $p\neq 2$ and  $\chi$ be the quadratic character of $\mathbb{F}_q$. Then $\chi|_{F_p}$ is the trivial character of $F_p$ if and only if $r$ is even.
\end{lem}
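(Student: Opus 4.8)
The plan is to compute the restriction $\chi|_{\mathbb{F}_p^*}$ explicitly via the power-map description of the quadratic character and compare it with the Legendre symbol on $\mathbb{F}_p$. Recall that for $a\in\mathbb{F}_q^*$ one has $\chi(a)=a^{(q-1)/2}$, and that the quadratic character $\eta$ of $\mathbb{F}_p$ satisfies $\eta(a)=a^{(p-1)/2}$ for $a\in\mathbb{F}_p^*$ by Euler's criterion. Since $\mathbb{F}_p^*$ admits only two characters into $\{\pm 1\}$, namely the trivial one and $\eta$, it suffices to decide which of these coincides with $\chi|_{\mathbb{F}_p^*}$.

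First I would factor $q-1=p^r-1=(p-1)(1+p+\cdots+p^{r-1})$, which gives $(q-1)/2=\tfrac{p-1}{2}\,(1+p+\cdots+p^{r-1})$, an integer since $p$ is odd. Then for every $a\in\mathbb{F}_p^*$ I compute
$$\chi(a)=a^{(q-1)/2}=\big(a^{(p-1)/2}\big)^{1+p+\cdots+p^{r-1}}=\eta(a)^{\,1+p+\cdots+p^{r-1}}.$$
Because $p$ is odd, each power $p^i$ is odd, so $1+p+\cdots+p^{r-1}\equiv r\pmod 2$; as $\eta(a)=\pm1$, this yields $\chi(a)=\eta(a)^{\,r}$ for all $a\in\mathbb{F}_p^*$, that is, $\chi|_{\mathbb{F}_p^*}=\eta^{\,r}$.

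Finally I would conclude by the parity of $r$. If $r$ is even then $\eta^{\,r}$ is identically $1$ on $\mathbb{F}_p^*$, so $\chi|_{\mathbb{F}_p}$ is trivial. If $r$ is odd then $\chi|_{\mathbb{F}_p^*}=\eta$, which is nontrivial: since $p\neq 2$, the group $\mathbb{F}_p^*$ is cyclic of even order $p-1$ and hence contains a nonsquare $a$ with $\eta(a)=-1$, forcing $\chi(a)=-1\neq 1$. This settles both implications.

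I expect no serious obstacle here; the only point requiring attention is the nontriviality of $\eta$ in the odd case, which is precisely where the hypothesis $p\neq 2$ enters (for $p=2$ every element of $\mathbb{F}_p$ is a square and the equivalence would break down).
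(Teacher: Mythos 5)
Your proof is correct, but it follows a genuinely different route from the paper's. The paper argues field-theoretically: it picks a primitive element $g$ of $\mathbb{F}_p$, observes that triviality of $\chi|_{\mathbb{F}_p}$ is equivalent to $\chi(g)=1$, i.e., to $x^2-g$ having a root $\gamma$ in $\mathbb{F}_q$, and then concludes from degrees of extensions: since $g$ is a nonsquare in $\mathbb{F}_p$, one has $\mathbb{F}_p(\gamma)=\mathbb{F}_{p^2}$, and $\mathbb{F}_{p^2}\subseteq\mathbb{F}_q$ exactly when $r$ is even. You instead compute the restriction outright: from $\chi(a)=a^{(q-1)/2}$, Euler's criterion, and the factorization $(q-1)/2=\frac{p-1}{2}\,(1+p+\cdots+p^{r-1})$ together with $1+p+\cdots+p^{r-1}\equiv r\pmod 2$ (as $p$ is odd), you obtain the exact identity $\chi|_{\mathbb{F}_p^*}=\eta^{\,r}$, which settles both implications simultaneously. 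Your computation is more elementary (no extension fields needed) and gives a sharper conclusion --- for odd $r$ the restriction is precisely the Legendre symbol $\eta$, not merely some nontrivial character --- while the paper's argument is shorter and more conceptual. It is also worth noting that the paper's proof as written only spells out the ``only if'' direction (trivial restriction forces $r$ even, via $\mathbb{F}_{p^2}\subseteq\mathbb{F}_q$); the converse, that $r$ even makes every element of $\mathbb{F}_p^*$ a square in $\mathbb{F}_q$, is left implicit there, whereas your identity $\chi|_{\mathbb{F}_p^*}=\eta^{\,r}$ handles it explicitly.
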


\begin{proof}
Let $g$ be a primitive element of $F_p$. Then  $\chi|_{F_p}$ is the trivial character of $F_p$ if and only if $\chi(g)=1$, that is, $x^2-g=0$ has a root $\gamma$ in $\mathbb{F}_q$. Therefore, $F_p(\gamma)\subseteq \mathbb{F}_q$, i.e., $r$ is even.
\end{proof}

\begin{thm}\label{thm:n-subset}
Let $q=p^{2e}$. Denote by $M(n,0, 0)$ the number of $n$-subsets of $\mathbb{F}_q$ whose elements are the  solutions of the equations
\be\label{def:equations1}
\Bigg\{
\begin{array}{lll}
x_1^2+\cdots+x_n^2&=&0,\\
x_1+\cdots+x_n&=&0.
\end{array}
\ee

$i)$\ For $p\nmid n$, 
$$M(n, 0, 0)=\frac{1}{q^2}
\left(\begin{array}{c}
q\\n
\end{array}\right)
+\frac{q-1}{2\sqrt{q^3}}(\alpha(n)-(-1)^n\beta(n)).$$

$ii)$\ For $p|n$, $$
M(n, 0, 0)=\frac{1}{q^2}
\left(\begin{array}{c}
q\\n
\end{array}\right)
+\frac{q-1}{q^2}\left(\begin{array}{c}q/p\\n/p\end{array}\right)+\frac{q-1}{2q}(\alpha(n)+(-1)^n\beta(n)).$$
\end{thm}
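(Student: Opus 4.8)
The plan is to apply the distinct-coordinate sieve of Li and Wan to the symmetric set
$$X=\{(x_1,\dots,x_n)\in\mathbb{F}_q^n: x_1+\cdots+x_n=0,\ x_1^2+\cdots+x_n^2=0\},$$
and then divide by $n!$. Since $M(n,0,0)$ counts unordered $n$-subsets and each subset of distinct elements satisfying the (symmetric) equations yields $n!$ distinct-coordinate tuples in $X$, we have $M(n,0,0)=|\overline X|/n!$. As $X$ is symmetric, the sieve gives $|\overline X|=\sum_{\tau\in C_n}(-1)^{n-l(\tau)}C(\tau)|X_\tau|$ with $C(\tau)=N(c_1,\dots,c_n)$. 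For $\tau$ of type $(c_1,\dots,c_n)$, collapsing each cycle to a single variable turns the two defining equations into a system in the $l(\tau)$ cycle-variables whose coefficients are the cycle lengths reduced modulo $p$. A cycle of length $i$ with $p\mid i$ contributes coefficient $0$ to both equations, so its variable is free; writing $d=\sum_{p\mid i}c_i$ for the number of such cycles and $m=\sum_{p\nmid i}c_i$ for the rest, I get $|X_\tau|=q^{d}\,N$, where $N$ counts the solutions of a genuine $m$-variable system of the type in Lemma \ref{lem:equations} with $a_0=b_0=0$ and all $a_j=b_j$ equal to the nonzero cycle lengths mod $p$.

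Next I would evaluate $N$ by Lemma \ref{lem:equations}. Here $a=\prod_{p\nmid i}(\text{cycle lengths})\in\mathbb{F}_p^*$, while $b=\sum_{p\nmid i}(\text{cycle lengths})\equiv n\pmod p$ because the $p$-divisible cycle lengths sum to a multiple of $p$, and $c=b_0^2-a_0b=0$. The decisive simplification is that $q=p^{2e}$ has even degree over $\mathbb{F}_p$, so by Lemma \ref{lem:CHAR} the quadratic character $\chi$ is trivial on $\mathbb{F}_p^*$; since every argument of $\chi$ occurring in Lemma \ref{lem:equations} is a product of cycle lengths, $b$, and a sign, hence lies in $\mathbb{F}_p^*$, all those $\chi$-values equal $1$. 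Thus for $p\nmid n$ (case $b\neq 0,\ c=0$) I obtain $N=q^{m-2}$ for $m$ even and $N=q^{m-2}+(q-1)q^{(m-3)/2}$ for $m$ odd, while for $p\mid n$ (case $b=c=0$) I obtain $N=q^{m-2}$ for $m$ odd and $N=q^{m-2}+(q-1)q^{(m-2)/2}$ for $m$ even.

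I would then reassemble the sieve sum into a main term plus character corrections. Multiplying the leading piece $q^{m-2}$ by $q^{d}$ gives $q^{l(\tau)-2}$, and $\sum_\tau(-1)^{n-l(\tau)}C(\tau)q^{l(\tau)-2}$ equals $q^{-2}$ times the sieve applied to $X=\mathbb{F}_q^n$, hence $q^{-2}\,n!\binom{q}{n}$, contributing $\frac{1}{q^2}\binom{q}{n}$. The $\chi$-correction involves only $m$ odd (for $p\nmid n$) or only $m$ even (for $p\mid n$); matching the weight $(-1)^{l(\tau)}q^{d+m/2}$ with the product $\prod_{p\mid i}(-q)^{c_i}\prod_{p\nmid i}(-\sqrt q)^{c_i}$ of Lemma \ref{lem:S}, these repackage as $(q-1)q^{-3/2}(-1)^nS_-(n)$ and $(q-1)q^{-1}(-1)^nS_+(n)$ respectively. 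Substituting $S_\pm(n)=\tfrac{n!}{2}((-1)^n\alpha(n)\pm\beta(n))$ and dividing by $n!$ produces the $\alpha,\beta$ terms in both statements; in particular case $i)$ follows directly, since $p\nmid n$ forces $m\geq 1$ throughout.

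The hard part will be the terms with $m=0$, i.e. permutations all of whose cycle lengths are divisible by $p$; these arise exactly when $p\mid n$ and fall outside the hypothesis ``$b_i\neq 0$ for some $i$'' of Lemma \ref{lem:equations}. For such $\tau$ both equations are vacuous, so the true value is $|X_\tau|=q^{l(\tau)}$, not the $q^{l(\tau)-2}$ absorbed into the main term nor the value wrongly predicted by the Lemma. I would isolate their net weight $P:=\sum_{m=0}(-1)^{l(\tau)}C(\tau)q^{l(\tau)}$ and evaluate it by the generating-function computation in the proof of Lemma \ref{lem:S}, setting $t_i=0$ for $p\nmid i$ and $t_i=-q$ for $p\mid i$; this yields exponential generating function $(1-u^p)^{q/p}$ and hence $P=(-1)^{n/p}\,n!\binom{q/p}{n/p}$. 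Tracking how $P$ must be subtracted from the $S_+(n)$ packaging (where the $m=0$ term was illegitimately included) and re-added with the correct power of $q$, the $P$-contributions combine to $\frac{(q-1)(-1)^{n+n/p}}{q^2}\binom{q/p}{n/p}$; finally the parity identity $(-1)^{n+n/p}=(-1)^{(n/p)(p+1)}=1$, valid since $p$ is odd, removes the sign and gives the middle term $\frac{q-1}{q^2}\binom{q/p}{n/p}$ of case $ii)$. Collecting the three pieces yields the two claimed formulae.
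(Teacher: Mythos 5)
Your proposal is correct and follows essentially the same route as the paper's proof: the Li--Wan distinct-coordinate sieve over conjugacy classes of $S_n$, evaluation of $|X_\tau|$ via Lemma \ref{lem:equations} with all character values trivialized by Lemma \ref{lem:CHAR} (since $q=p^{2e}$), repackaging of the corrections as $(-1)^nS_\mp(n)$ via Lemma \ref{lem:S}, and a separate correction term for the permutations all of whose cycle lengths are divisible by $p$, with the same parity identity $(-1)^{n+n/p}=1$ at the end. Your only deviations are cosmetic: you evaluate the all-$p$-divisible-cycle sum $P$ by the generating function $(1-u^p)^{q/p}$ rather than citing Lemma 3.1 of \cite{LW2} (the same computation underlying Lemma \ref{lem:S}), and you apply the formula of Lemma \ref{lem:equations} uniformly down to $m=1$ (valid, as it yields the correct value $1$ there) where the paper treats the case $s=1$ separately before folding it into the $S_-$ sum.
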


\begin{proof}
Let $X$ be the set of all solutions of the equations \eqref{def:equations1}. Then $X$ is symmetric, so we have 
$$n!M(n, 0,0)=\sum\limits\limits_{\tau\in C_n}(-1)^{n-l(\tau)}C(\tau)|X_\tau|.$$
For a type $(c_1, c_2, \cdots, c_n)$  permutation $\tau$, we have $\sum\limits\limits_{i=1}^nic_i=n$ and $l(\tau)=\sum\limits\limits_{i=1}^nc_i$. Denote by $r$ the number of the cycles of $\tau$ such that the length of it is divisible by $p$, and denote by $s$ the number of the cycles of $\tau$ such that the length of it is not divisible by $p$. Note that $r+s=l(\tau)$ and $\sum\limits_{p\nmid i}ic_i\equiv n\pmod p$.

$i)$ Since $p\nmid n$, we have $s\geq 1$. If $s=1$, then $|X_\tau|=q^{l(\tau)-1}$. If $s\geq 2$, then by $i)$ of Lemma $\ref{lem:equations}$, we have 
$$|X_\tau|=\left\{\begin{array}{ll}
q^rq^{s-2}&{\rm if}\ s\ {\rm is\ even},\\
q^r(q^{s-2}+q^{(s-3)/2}(q-1)\chi((-1)^{(s-1)/2}\prod\limits_{p\nmid i}i^{c_i}\sum\limits_{p\nmid i}ic_i)&{\rm if}\ s\ {\rm is\ odd},\end{array}\right.$$
Since $q=p^{2e}$, by Lemma $\ref{lem:CHAR}$ we have 
$$|X_\tau|=\left\{\begin{array}{ll}
q^{l(\tau)-2}&{\rm if}\ s\ {\rm is\ even},\\
q^{l(\tau)-2}+(q-1)q^{-\frac{3}{2}}\prod\limits_{p|i}q^{c_i}\prod\limits_{p\nmid i}\sqrt{q}^{c_i}&{\rm if}\ s\ {\rm is\ odd},\end{array}\right.$$

Therefore, we have

$$
\begin{array}{rl}
M(n,0, 0)=&\frac{1}{n!}\sum\limits\limits_{\sum\limits\limits_{i}ic_i=n, s\geq 2}(-1)^{n-l(\tau)}C(\tau)q^{l(\tau)-2}+\frac{1}{n!}\sum\limits\limits_{\sum\limits\limits_{i}ic_i=n, s=1}(-1)^{n-l(\tau)}C(\tau)q^{l(\tau)-1}\\
&+\frac{1}{n!}(q-1)q^{-\frac{3}{2}}\sum\limits\limits_{\sum\limits\limits_{i}ic_i=n, s>2\ {\rm is\ odd}}(-1)^{n-l(\tau)}C(\tau)\prod\limits_{p|i}q^{c_i}\prod\limits_{p\nmid i}\sqrt{q}^{c_i}\\
=&\frac{1}{n!}\sum\limits\limits_{i=1}^n(-1)^{n-i}c(n,i)q^{i-2}\\
&+\frac{1}{n!}(q-1)q^{-\frac{3}{2}}\sum\limits\limits_{\sum\limits\limits_{i}ic_i=n, s\ {\rm is\ odd}}(-1)^{n-l(\tau)}C(\tau)\prod\limits_{p|i}q^{c_i}\prod\limits_{p\nmid i}\sqrt{q}^{c_i}\\
=&\frac{1}{q^2}
\left(\begin{array}{c}
q\\n
\end{array}\right)
+\frac{1}{n!}(-1)^n(q-1)q^{-\frac{3}{2}}S_-(n)\\
=&\frac{1}{q^2}
\left(\begin{array}{c}
q\\n
\end{array}\right)
+\frac{q-1}{2\sqrt{q^3}}(\alpha(n)-(-1)^n\beta(n)).
\end{array}
$$
  
 $ii)$ Since $p|n$, we have $s\neq 1$. If $s=0$, then $|X_\tau|=q^{l(\tau)}$. Denote by $CP_n$ the conjugacy classes in $C_n$ whose every cycle length is divisible by $p$, and denote by $p(n,i)$  the number of permutations in $S_n$ of $i$ cycles with the length of its each cycle divisible by $p$. If $s>0$, then by $iii)$ of Lemma $\ref{lem:equations}$ and $\ref{lem:CHAR}$ we have 

$
\begin{array}{rl}
M(n,0, 0)=&\frac{1}{n!}\sum\limits\limits_{\tau\notin CP_n}(-1)^{n-l(\tau)}C(\tau)|X_\tau|+\frac{1}{n!}\sum\limits\limits_{\tau\in CP_n}(-1)^{n-l(\tau)}C(\tau)|X_\tau|\\
 =&\frac{1}{n!}\sum\limits\limits_{\tau\notin CP_n}(-1)^{n-l(\tau)}C(\tau)|X_\tau|+\frac{1}{n!}\sum\limits\limits_{i=1}^n(-1)^{n-i}p(n,i)q^i\\
 =&\frac{1}{n!}\sum\limits\limits_{i=1}^n(-1)^{n-i}(c(n,i)-p(n,i))q^{i-2}+\frac{1}{n!}\sum\limits\limits_{i=1}^n(-1)^{n-i}p(n,i)q^i\\
&+\frac{1}{n!}(q-1)q^{-1}\sum\limits\limits_{s>0\ {\rm is\ even}}(-1)^{n-l(\tau)}C(\tau)\prod\limits_{p|i}q^{c_i}\prod\limits_{p\nmid i}\sqrt{q}^{c_i}\\
=&\frac{1}{n!}\sum\limits_{i=1}^n(-1)^{n-i}c(n,i)q^{i-2}+\frac{1}{n!}\frac{q-1}{q^2}\sum\limits\limits_{i=1}^n(-1)^{n-i}p(n,i)q^{i}\\
&+\frac{1}{n!}(q-1)q^{-1}\sum\limits\limits_{s\ {\rm is\ even}}(-1)^{n-l(\tau)}C(\tau)\prod\limits_{p|i}q^{c_i}\prod\limits_{p\nmid i}\sqrt{q}^{c_i}
\end{array}.
$

Recall that $\sum\limits_{i=1}^n(-1)^{n-i}p(n,i)q^{i}=(-1)^{n+\frac{n}{p}}n!\left(\begin{array}{c} q/p\\n/p\end{array}\right)$ (See Lemma 3.1 in $\cite{LW2}$) and $p$ is an odd prime number. Therefore,  
$$\begin{array}{rl}M(n,0, 0)=&\frac{1}{q^2}
\left(\begin{array}{c}q\\n\end{array}\right)+\frac{q-1}{q^2}\left(\begin{array}{c}q/p\\n/p\end{array}\right)+\frac{1}{n!}\frac{q-1}{q}S_+(n)\\
=&\frac{1}{q^2}
\left(\begin{array}{c}
q\\n
\end{array}\right)
+\frac{q-1}{q^2}\left(\begin{array}{c}q/p\\n/p\end{array}\right)+\frac{q-1}{2q}(\alpha(n)+(-1)^n\beta(n)).
\end{array}
$$
\end{proof}

\begin{cor}\label{cor:cor1}
Let $q=p^{2e}$. Denote by $M^{\prime}(n,0,0)$ the number of $n$-subsets of $\mathbb{F}_q$ whose elements are the common solutions of the equations

\be\label{def:equations2}
\left\{\begin{array}{ccc}
\sum\limits_{1\leq i<j\leq n}x_ix_j&=&0,\\
\sum\limits_{1\leq i\leq n}x_i&=&0.
\end{array}\right.
\ee

Then $M^{\prime}(n,0,0)=M(n,0,0)$
\end{cor}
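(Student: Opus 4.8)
The plan is to show that the two systems of equations cut out \emph{identical} solution sets in $\mathbb{F}_q^n$, after which the equality of the two counts is immediate. The bridge between them is the Newton identity relating the power sum $\sum_{i=1}^n x_i^2$ to the two elementary symmetric functions $\sum_{i=1}^n x_i$ and $\sum_{1\le i<j\le n} x_i x_j$, combined with the standing assumption of this section that $q$ is odd.

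First I would recall the polynomial identity
$$\sum_{i=1}^n x_i^2 = \Big(\sum_{i=1}^n x_i\Big)^2 - 2\sum_{1\le i<j\le n} x_i x_j,$$
which holds over any commutative ring and in particular over $\mathbb{F}_q$. Next I would restrict attention to the tuples $(x_1,\dots,x_n)$ satisfying the linear constraint $\sum_{i=1}^n x_i = 0$, which is common to both $\eqref{def:equations1}$ and $\eqref{def:equations2}$. On this locus the identity collapses to
$$\sum_{i=1}^n x_i^2 = -2\sum_{1\le i<j\le n} x_i x_j.$$
Since $q$ is odd, $2$ is a unit in $\mathbb{F}_q$, and hence $\sum_{i=1}^n x_i^2 = 0$ if and only if $\sum_{1\le i<j\le n} x_i x_j = 0$. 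Therefore, among tuples with $\sum_i x_i=0$, the quadratic condition of $\eqref{def:equations1}$ and the pairwise-product condition of $\eqref{def:equations2}$ are equivalent, so the two systems have exactly the same solution set in $\mathbb{F}_q^n$.

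Finally, I would note that both defining conditions are symmetric in $x_1,\dots,x_n$, so passing from ordered solution tuples to $n$-subsets of distinct elements of $\mathbb{F}_q$ respects this equality of solution sets; the two systems single out precisely the same collection of $n$-subsets. This yields $M'(n,0,0)=M(n,0,0)$. There is essentially no obstacle in this argument: the only step requiring care is the invocation of the hypothesis that $q$ is odd, which guarantees that $2$ is invertible and hence makes the equivalence $\sum_i x_i^2=0 \iff \sum_{i<j}x_ix_j=0$ valid under the constraint $\sum_i x_i=0$; in even characteristic this equivalence would break down and the corollary as stated would need modification.
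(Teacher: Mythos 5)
Your proof is correct and follows the same route as the paper, whose proof simply asserts that the systems $\eqref{def:equations1}$ and $\eqref{def:equations2}$ are equivalent; you have merely supplied the detail behind that assertion, namely the identity $\sum_i x_i^2 = \left(\sum_i x_i\right)^2 - 2\sum_{i<j} x_i x_j$ together with the invertibility of $2$ in odd characteristic. Your closing remark about the necessity of $q$ odd is also consistent with the paper's standing assumption in this section.
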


\begin{proof}
 This result follows from the fact that the equations \eqref{def:equations2} are equivalent to the equations \eqref{def:equations1}.
\end{proof}

Let $X$ be the set of all solutions of the equations \eqref{def:equations2} in $\mathbb{F}_q^n$. Put $$X_1=\{(x_1, \cdots, x_n)\in X|x_i\neq x_j,\ \forall 1\leq i\neq j\leq n-1\}.$$ Now the symmetric group $S_{n-1}$ acts on $\mathbb{F}_q^n$ by permuting the first $n-1$ coordinates. The similar arguments of the $|\overline{X}|$ show that 
 $$|X_1|=\sum\limits\limits_{\tau\in C_{n-1}}(-1)^{n-1-l(\tau)}C(\tau)|X_\tau|,$$
where $C(\tau)$ is the number of permutations conjugate to $\tau\in S_{n-1}$ and $l(\tau)$ is the number of cycles including the trivial cycle. 
Denote by $M_1(n,0,0)$ the number of $n$-subsets $\{x_1, x_2, \cdots, x_n|x_i\neq x_j,\ \forall 1\leq i\neq j\leq n-1\}$ of $\mathbb{F}_q$ whose elements are the common solutions of the equations \eqref{def:equations2}. The similar proof of Theorem $\ref{thm:n-subset}$ shows  the following theorem:

\begin{thm}\label{thm:n-1-subset}
Let $q=p^{2e}$. Then

$i)$\ For $p\nmid n$ , 
$$M_1(n, 0, 0)=\frac{1}{q}
\left(\begin{array}{c}
q\\n-1
\end{array}\right)
+\frac{q-1}{2q}(\alpha(n-1)+(-1)^{n-1}\beta(n-1)).$$

$ii)$\ For $p|n$, 
$$M_1(n, 0, 0)=\frac{1}{q}
\left(\begin{array}{c}
q\\n-1
\end{array}\right)
+\frac{q-1}{2\sqrt{q}}(\alpha(n-1)-(-1)^{n-1}\beta(n-1)),$$
where $$\alpha(n-1)=\sum\limits_{i+pj=n-1, 0\leq i\leq \sqrt{q}}\left(\begin{array}{c}\sqrt{q}\\i\end{array}\right)\left(\begin{array}{c}\frac{q-\sqrt{q}}{p}\\ j\end{array}\right)$$ and $$\beta(n-1)=\sum\limits_{i+pj=n-1, i\geq 0}(-1)^j\left(\begin{array}{c}\sqrt{q}-1+i\\ \sqrt{q}-1\end{array}\right)\left(\begin{array}{c}\frac{q+\sqrt{q}}{p}\\ j\end{array}\right).$$
\end{thm}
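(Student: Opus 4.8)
The plan is to run the distinct-coordinate sieve exactly as in the proof of Theorem \ref{thm:n-subset}, but over the group $S_{n-1}$ acting on the first $n-1$ coordinates, the essential new feature being that the last coordinate $x_n$ is never constrained and therefore always supplies one extra non-degenerate direction in the quadratic--linear system of Lemma \ref{lem:equations}. By Corollary \ref{cor:cor1} the solution set $X$ of \eqref{def:equations2} equals that of \eqref{def:equations1}, so I work throughout with $\sum x_i^2=0$, $\sum x_i=0$. Since on the tuples of $X_1$ the first $n-1$ entries are distinct, $S_{n-1}$ acts freely, whence $(n-1)!\,M_1(n,0,0)=|X_1|=\sum_{\tau\in C_{n-1}}(-1)^{n-1-l(\tau)}C(\tau)|X_\tau|$, which is the identity I then evaluate term by term.

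First I would compute $|X_\tau|$ for $\tau\in S_{n-1}$ of type $(c_1,\dots,c_{n-1})$. Forcing the coordinates to be constant along each cycle and leaving $x_n$ free turns the system into $\sum_{\text{cyc}} i\,y_{\text{cyc}}^2+x_n^2=0$ and $\sum_{\text{cyc}} i\,y_{\text{cyc}}+x_n=0$ in $l(\tau)+1$ variables. A cycle of length $i$ with $p\mid i$ has vanishing coefficient in both equations, so its variable is free; these contribute a factor $q^{r}$ where $r=\sum_{p\mid i}c_i$, while the $s=\sum_{p\nmid i}c_i$ remaining cycle-variables together with $x_n$ form a system to which Lemma \ref{lem:equations} applies with $m=s+1$ non-degenerate variables. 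Here $a_0=b_0=0$ gives $c=b_0^2-a_0b=0$, while $b=\big(\sum_{p\nmid i}ic_i\big)+1\equiv(n-1)+1=n\pmod p$, the extra $+1$ coming precisely from $x_n$. This congruence selects the branch of Lemma \ref{lem:equations} and is the source of the parity shift relative to Theorem \ref{thm:n-subset}: for $p\nmid n$ one has $b\neq0$, so case $i)$ applies and the character correction occurs exactly when $m=s+1$ is odd (i.e. $s$ even); for $p\mid n$ one has $b=c=0$, so case $iii)$ applies, the odd-$m$ term vanishes since $a_0=0$ forces $\chi(0)=0$, and the surviving correction (with $v(a_0)=v(0)=q-1$) occurs when $m$ is even (i.e. $s$ odd). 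In every branch the character argument is a product of cycle lengths times a sign, hence lies in $\mathbb{F}_p^{*}$, so Lemma \ref{lem:CHAR} (using $q=p^{2e}$) replaces each $\chi(\cdot)$ by $1$ and $|X_\tau|$ depends only on $r$ and $s$.

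Finally I would carry out the summation. The main term $q^{r}q^{m-2}=q^{l(\tau)-1}$ occurs for every $\tau$ and, grouped by $l(\tau)=i$, yields $\frac{1}{(n-1)!}\sum_i(-1)^{n-1-i}c(n-1,i)q^{i-1}=\frac1q\binom{q}{n-1}$ through the Stirling identity $\sum_i(-1)^{n-1-i}c(n-1,i)q^{i}=(n-1)!\binom{q}{n-1}$. For the correction terms I rewrite $q^{r}q^{(s-2)/2}=q^{-1}\prod_{p\mid i}q^{c_i}\prod_{p\nmid i}\sqrt q^{c_i}$ when $p\nmid n$ and $q^{r}q^{(s-1)/2}=q^{-1/2}\prod_{p\mid i}q^{c_i}\prod_{p\nmid i}\sqrt q^{c_i}$ when $p\mid n$, and then absorb the sign via $(-1)^{n-1-l(\tau)}\prod_{p\mid i}q^{c_i}\prod_{p\nmid i}\sqrt q^{c_i}=(-1)^{n-1}\prod_{p\mid i}(-q)^{c_i}\prod_{p\nmid i}(-\sqrt q)^{c_i}$; summing against $C(\tau)=N(c_1,\dots,c_{n-1})$ over $s$ even produces $(-1)^{n-1}S_+(n-1)$ and over $s$ odd produces $(-1)^{n-1}S_-(n-1)$. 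Substituting the closed forms of Lemma \ref{lem:S} then gives the correction $\frac{q-1}{2q}(\alpha(n-1)+(-1)^{n-1}\beta(n-1))$ for $p\nmid n$ and $\frac{q-1}{2\sqrt q}(\alpha(n-1)-(-1)^{n-1}\beta(n-1))$ for $p\mid n$, matching the claim.

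I expect the main obstacle to be the bookkeeping forced by the free coordinate $x_n$: one must argue cleanly that it shifts $m$ from $s$ to $s+1$, thereby interchanging the even/odd branches and the roles of $S_+$ and $S_-$ compared with Theorem \ref{thm:n-subset}, and one must verify that this shift makes every boundary value of $s$ fall into the generic formula, so that no case needs separate handling. Concretely, when $p\mid n$ the relation $n-1\equiv-1\not\equiv0\pmod p$ forces $s\geq1$, which is why the auxiliary $\binom{q/p}{n/p}$-type term present in Theorem \ref{thm:n-subset} disappears here; and when $p\nmid n$ the value $s=0$ (so $m=1$) is already subsumed by case $i)$, since it contributes the single solution $x_n=0$. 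Confirming these two reductions is the crux that keeps the final formula free of extra terms.
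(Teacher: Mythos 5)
Your proposal is correct and takes exactly the route the paper intends: the paper's own ``proof'' of Theorem \ref{thm:n-1-subset} is a one-line assertion that the argument of Theorem \ref{thm:n-subset} carries over to the $S_{n-1}$-sieve, and your write-up supplies precisely the omitted details --- the free coordinate $x_n$ raising the nondegenerate variable count from $s$ to $s+1$, the congruence $b\equiv n\pmod p$ selecting the relevant case of Lemma \ref{lem:equations}, the resulting interchange of the even/odd branches and hence of $S_+(n-1)$ and $S_-(n-1)$, and the boundary checks ($s\geq 1$ when $p\mid n$, explaining the absence of a $\binom{q/p}{n/p}$-term, and $s=0$ subsumed by the generic formula when $p\nmid n$) --- all of which I verified to be accurate.
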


\begin{thm}\label{thm:a1=a2=0}
Let $q=p^{2e}$ and  $k\leq n-1$. 

(i)\ If $p\nmid n$, then 

$$\begin{array}{ll}N_k(x^n,n-3)=&q^{n-k-2}\left(\begin{array}{c}
q\\k
\end{array}\right)\sum\limits\limits_{i=0}^{n-k}(-1)^i\left(\begin{array}{c}
q-k\\i
\end{array}\right)q^{-i}\\
&+(-1)^{n-k-1}\left(\begin{array}{c}
n-1\\k
\end{array}\right)\frac{q-1}{2q}D(n-1)+(-1)^{n-k}\left(\begin{array}{c}
n\\k
\end{array}\right)\frac{q-1}{2\sqrt{q^3}}D(n),\end{array}$$
where $D(n-1)=\alpha(n-1)+(-1)^{n-1}\beta(n-1)$ and $D(n)=\alpha(n)-(-1)^{n}\beta(n)$.

(ii)\ If $p|n$, then $$\begin{array}{ll}N_k(x^n,n-3)=&q^{n-k-2}\left(\begin{array}{c}
q\\k
\end{array}\right)\sum\limits\limits_{i=0}^{n-k}(-1)^i\left(\begin{array}{c}
q-k\\i
\end{array}\right)q^{-i}+(-1)^{n-k}\frac{q-1}{q^2}\left(\begin{array}{c}n\\k\end{array}\right)\left(\begin{array}{c}q/p\\n/p\end{array}\right)\\
&+(-1)^{n-k-1}\left(\begin{array}{c}
n-1\\k
\end{array}\right)\frac{q-1}{2\sqrt{q}}P(n-1)+(-1)^{n-k}\left(\begin{array}{c}
n\\k
\end{array}\right)\frac{q-1}{2q}P(n),\end{array}$$ 
where $P(n-1)=\alpha(n-1)-(-1)^{n-1}\beta(n-1)$ and $P(n)=\alpha(n)+(-1)^{n}\beta(n)$.
\end{thm}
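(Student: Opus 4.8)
The plan is to follow the same inclusion--exclusion scheme used for the cases $n-\ell=1$ and $n-\ell=2$, but now with \emph{two} coefficients of $f$ pinned down rather than one. Writing $f(x)=\prod_{c\in J}(x-c)\,g(x)$ for a $k$-subset $J=\{c_1,\dots,c_k\}\subset\mathbb{F}_q$ and a monic $g(x)=x^{n-k}+d_1x^{n-k-1}+\cdots+d_{n-k}$, I would first compare coefficients. Since $u(x)=x^n$ forces the coefficients of $x^{n-1}$ and $x^{n-2}$ in $f$ to vanish, expanding the product shows that $d_1=e_1(J)$ and $d_2=e_1(J)^2-e_2(J)$ are both determined by $J$, where $e_1,e_2$ are the first two elementary symmetric functions. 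Thus $g$ carries exactly $n-k-2$ free coefficients $d_3,\dots,d_{n-k}$. Equivalently, by Newton's identities the two vanishing-coefficient conditions say that the full root multiset of $f$ has vanishing first \emph{and} second power sums, which is precisely the system $\sum x_i=0,\ \sum x_i^2=0$ governing Theorems \ref{thm:n-subset} and \ref{thm:n-1-subset}.

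Next, for a subset $I\subseteq\mathbb{F}_q\setminus J$ I would define $S_I(J)=\{g:\ d_1,d_2 \text{ fixed as above},\ g(\alpha)=0\ \forall\alpha\in I\}$ exactly as before. For $0\le|I|=i\le n-k-2$ each condition $g(\alpha)=0$ is an independent affine-linear equation in the free coefficients (the relevant coefficient matrix is Vandermonde in the distinct $\alpha$), so $|S_I(J)|=q^{\,n-k-2-i}$ and hence $\sum_{|J|=k}\sum_{|I|=i}|S_I(J)|=\binom{q}{k}\binom{q-k}{i}q^{\,n-k-2-i}$.

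The two boundary terms are where the new content lies, and I expect $i=n-k-1$ to be the main obstacle. When $i=n-k$, monicity forces $g(x)=\prod_{\alpha\in I}(x-\alpha)$, so $f$ splits into $n$ distinct linear factors with roots $J\cup I$ subject to $\sum=0,\ \sum(\cdot)^2=0$; choosing which $k$ of the $n$ roots form $J$ contributes a factor $\binom{n}{k}$, giving $\sum_{|J|=k}\sum_{|I|=n-k}|S_I(J)|=\binom{n}{k}M(n,0,0)$ by Theorem \ref{thm:n-subset}. When $i=n-k-1$, monicity forces $g(x)=(x-\gamma)\prod_{\alpha\in I}(x-\alpha)$ for a single extra root $\gamma\in\mathbb{F}_q$; the linear condition $d_1=e_1(J)$ determines $\gamma=-e_1(J\cup I)$ uniquely, and the quadratic condition then becomes a single constraint on the pair $(J\cup I,\gamma)$. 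Here $\gamma$ is allowed to coincide with an element of $J\cup I$, so the configurations are exactly those counted by $M_1(n,0,0)$: an $(n-1)$-set of distinct values together with one possibly-repeated value satisfying $\sum=0,\ \sum(\cdot)^2=0$. Accounting for the $\binom{n-1}{k}$ ways to mark $J$ inside the $(n-1)$-set yields $\sum_{|J|=k}\sum_{|I|=n-k-1}|S_I(J)|=\binom{n-1}{k}M_1(n,0,0)$ by Theorem \ref{thm:n-1-subset}. Verifying this bijection---and in particular that $\gamma$ is uniquely recovered and that repeated-root configurations match $M_1$ exactly---is the delicate step.

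Finally I would assemble everything through inclusion--exclusion, $N_k(x^n,n-3)=\sum_{i=0}^{n-k}(-1)^i\sum_{|J|=k}\sum_{|I|=i}|S_I(J)|$. The cleanest packaging is to write the whole alternating sum \emph{as if} $|S_I(J)|=q^{n-k-2-i}$ held for every $i$---producing the leading term $q^{n-k-2}\binom{q}{k}\sum_{i=0}^{n-k}(-1)^i\binom{q-k}{i}q^{-i}$---and then add the two correction terms equal to the actual boundary contributions minus their ``generic'' values. Using the identity $\binom{q}{k}\binom{q-k}{m-k}=\binom{m}{k}\binom{q}{m}$ with $m=n-1,n$, the generic values of the top two terms are exactly $\binom{n-1}{k}\frac1q\binom{q}{n-1}$ and $\binom{n}{k}\frac1{q^2}\binom{q}{n}$, which cancel against the corresponding main terms of $M_1(n,0,0)$ and $M(n,0,0)$. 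What survives are precisely the character-sum parts of Theorems \ref{thm:n-subset} and \ref{thm:n-1-subset}, namely the $D(\cdot)$ (resp.\ $P(\cdot)$) expressions, with signs $(-1)^{n-k-1}$ and $(-1)^{n-k}$; splitting into the cases $p\nmid n$ and $p\mid n$ and substituting the two theorems then gives the stated formula. The only remaining bookkeeping is the extra $\frac{q-1}{q^2}\binom{q/p}{n/p}$ piece that appears when $p\mid n$, which comes along for free from the $p\mid n$ branch of Theorem \ref{thm:n-subset}.
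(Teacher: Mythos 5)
Your proposal is correct and takes essentially the same route as the paper: the paper's own proof consists only of the two binomial identities $\binom{n-1}{k}\binom{q}{n-1}=\binom{q}{k}\binom{q-k}{n-k-1}$ and $\binom{n}{k}\binom{q}{n}=\binom{q}{k}\binom{q-k}{n-k}$ plus the remark that the result ``follows from the similar proof of Theorem \ref{thm:a1=0}'', and your write-up supplies exactly the intended details of that argument. In particular, your treatment of the two boundary layers --- $i=n-k-1$ contributing $\binom{n-1}{k}M_1(n,0,0)$ via Theorem \ref{thm:n-1-subset} (with the extra root $\gamma$ uniquely determined and allowed to repeat) and $i=n-k$ contributing $\binom{n}{k}M(n,0,0)$ via Theorem \ref{thm:n-subset} --- is precisely what the paper's sketch presupposes.
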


\begin{proof}
Note that $$\left(\begin{array}{c}
n-1\\k
\end{array}\right)\left(\begin{array}{c}
q\\n-1
\end{array}\right)=\left(\begin{array}{c}
q\\k
\end{array}\right)\left(\begin{array}{c}
q-k\\n-k-1
\end{array}\right)\ {\rm and}\ \left(\begin{array}{c}
n\\k
\end{array}\right)\left(\begin{array}{c}
q\\n
\end{array}\right)=\left(\begin{array}{c}
q\\k
\end{array}\right)\left(\begin{array}{c}
q-k\\n-k
\end{array}\right).$$

This result follows from the similar proof of Theorem $\ref{thm:a1=0}.$
\end{proof}

\begin{rem}\label{rem:remark3}

$1)$ In the special case $k=n$, $N_k(x^n, n-3)=M(n,0,0).$

$2)$ For $n\geq q$,  the formulae of $N_k(x^n, n-3)$ can be obtained from the similar arguments of Remark $\ref{rem:remark1}$.

$i)$ If $n>q+1$, then $N_k(x^n, n-3)=q^{n-q-2}{q \choose k}(q-1)^{q-k}.$

$ii)$ If $n=q+1$, then$$
N_q(x^n, n-3)=\left\{\begin{array}{ll}1& k=q,\\ 0& k=q-1,\\ \frac{q-1}{q} {q \choose k}((q-1)^{q-k-1}+(-1)^{q-k})& k\leq q-2.\end{array}\right.$$

$ii)$ If $n=q$, then
$$
N_q(x^n, n-3)=\left\{\begin{array}{ll}1& k=q,\\ 0& k=q-1,\ q-2\\ \frac{q-1}{q} {q \choose k}(\frac{(q-1)^{q-k-1}}{q}+(-1)^{q-k-1}(q-k)+(-1)^{q-k}\frac{q+1}{q} )& k\leq q-3.\end{array}\right.$$
\end{rem}

\section{The spectrum of some jumped Wenger graphs}

In $\cite{Wenger}$, Wenger introduced a family of $p$-regular bipartite graphs and then Lazebnik and Ustimenko arrived at a family of bipartite graphs using a construction based on a certain Lie algebra for a prime power $q$ in $\cite{LU93}$. Later, Lazebnik and Viglione gave an equivalent representation of these graphs in $\cite{LV02}$. Then another useful representation of these graphs was given in $\cite{Viglione02}$, on which we concentrate in this section. All graph theory notions can be found in $\cite{Bollobas}$.

Let $m\geq 1$ be a positive integer and $g_k(x,y)\in \mathbb{F}_q[x,y]$ for $2\leq k\leq m+1$. Let $\mathfrak{P}=\mathbb{F}_q^{m+1}$ and $\mathfrak{L}=\mathbb{F}_q^{m+1}$ be two copies of the $(m+1)-$dimensional vector space over $\mathbb{F}_q$, which are called the point set and the line set respectively. If $a\in \mathbb{F}_q^{m+1}$, then we write $(a)\in\mathfrak{P}$ and $[a]\in\mathfrak{L}$. Denote $\mathfrak{G}=G_q(g_2,\cdots, g_{m+1})=(V,E)$ by the bipartite graph with vertex set $V=\mathfrak{B}\bigcup\mathfrak{L}$ and the edge set $E$ is defined as follows: there is an edge from a point $P=(p_1,p_2,\cdots,p_{m+1})\in\mathfrak{B}$ to a line $L=[l_1,l_2,\cdots,l_{m+1}]\in\mathfrak{L}$, denoted by $P~L$, if the following $m$ equalities hold:

\be\label{def:edge}
\Bigg\{
\begin{array}{lll}
l_2+p_2&=&g_2(p_1, l_1),\\
l_3+p_3&=&g_3(p_1, l_1),\\
&\vdots&\\
l_{m+1}+p_{m+1}&=&g_{m+1}(p_1, l_1).
\end{array}
\ee

If $g_k(x,y)=x^{k-1}y,\ k=2,\cdots, m+1$, then the graph is just the original Wenger graph denoted by  $W_m(q)$ in $\cite{CL14}$. The spectrum, the diameter and the automorphism group of $W_m(q)$ were studied in $\cite{CL14},\ \cite{LU93},\ \cite{LU95}$ and $\cite{Viglione08}$. In $\cite{CLWWW}$,  a new class of bipartite graphs called linearized Wenger graphs was introduced. These graphs were defined by $\eqref{def:edge}$ together with $g_k(x,y)=x^{p^{k-2}},\ k=2,\cdots, m+1$, which denoted by $L_m(q)$. When $m\geq e$, the spectrum of $L_m(q)$ was explicitly determined using results on linearized polynomials over finite fields. The diameter and girth of $L_m(q)$ also were obtained. Furthermore,  the spectrum of a general class of graphs, which defined  by $g_k(x,y)=f_k(x)y$ and the mapping $\sigma:\ \mathbb{F}_q\rightarrow \mathbb{F}_q^{m+1};\ u\rightarrow (1, f_2(u), \cdots, f_{m+1}(u))$ is injective, was studied. The eigenvalues of such a graph were determined  and their multiplicities were reduced to counting certain polynomials with a given number of roots over $\mathbb{F}_q$. That is, for all prime power $q$ and positive integer $m$, the eigenvalues of $\mathfrak{G}$, counted with multiplicities, are 
$$\pm\sqrt{qN_{F_\omega}},\ \ \ (\omega_1, \omega_2, \cdots, \omega_{m+1})\in \mathbb{F}_q^{m+1},$$ where $F_\omega(u)=\omega_1+\omega_2f_2(u)+\cdots+\omega_{m+1}f_{m+1}(u)$ and $N_{F_\omega}=|\{u\in\mathbb{F}_q:\ F_\omega(u)=0\}|$. For $0\leq i\leq q$, the multiplicity of $\pm\sqrt{qi}$ is $$n_i=|\{\omega\in \mathbb{F}_q^{m+1}:\ N_{F_\omega}=i\}|.$$

In this section, we use our previous results to get the spectrum of a general class of graphs defined by $\eqref{def:edge}$  together with polynomials $g_k(x,y)=f_k(x)y,\in \mathbb{F}_q[x,y]$ , where $f_k(x)=x^{k-1},\ 2\leq k\leq m, f_{m+1}(x)=x^{m+1}$ and $f_k(x)=x^{k-1},\ 2\leq k\leq m, f_{m+1}(x)=x^{m+2}$. These graphs are denoted by $JW_m^1(q)$ and $JW_m^2(q)$ respectively.

\begin{thm}\label{thm:JW1}
For all prime power $q$ and $1\leq m+1\leq q-1$, the distinct eigenvalues of $JW_m^1(q)$ are 
$$\pm q,\ \pm\sqrt{(m+1)q},\ \pm\sqrt{mq},\ \cdots,\ \ \pm\sqrt{2q},\ \pm\sqrt{q},\ 0.$$
The multiplicity of the eigenvalue $\pm\sqrt{iq}$ is 
$$(q-1)\left(\begin{array}{c}q\\ i\end{array}\right)\sum\limits_{d=i}^{m-1}\sum\limits_{k=0}^{d-i}(-1)^k\left(\begin{array}{c}q-i\\ k\end{array}\right)q^{d-i-k}+(q-1)N_i(x^{m+2},m-1),\ 0\leq i\leq m-1$$
and 
$$(q-1)N_i(x^{m+1},m-1),\ i=m,\ m+1.$$
\end{thm}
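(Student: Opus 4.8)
The plan is to feed the general spectral description recalled above (from \cite{CLWWW}) into our polynomial counting theorems. For $JW_m^1(q)$ the defining functions give
$$F_\omega(x)=\omega_1+\omega_2 x+\cdots+\omega_m x^{m-1}+\omega_{m+1}x^{m+1},\qquad \omega=(\omega_1,\dots,\omega_{m+1})\in\mathbb{F}_q^{m+1},$$
and the eigenvalues are $\pm\sqrt{qN_{F_\omega}}$, with the multiplicity of $\pm\sqrt{iq}$ equal to $n_i=|\{\omega:N_{F_\omega}=i\}|$. First I would pin down which $i$ occur. Since $1\le m+1\le q-1$, a non-zero $F_\omega$ has degree at most $m+1<q$ and hence at most $m+1$ roots, while $F_\omega\equiv 0$ (only $\omega=0$) has all $q$ roots. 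Because the monomial $x^m$ is absent, $\deg F_\omega$ can never equal $m$, so $\deg F_\omega\in\{0,1,\dots,m-1,m+1\}$ and $N_{F_\omega}\in\{0,1,\dots,m+1\}\cup\{q\}$. This already produces the claimed list of distinct eigenvalues $\pm q,\pm\sqrt{(m+1)q},\dots,\pm\sqrt{q},0$.

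The heart of the proof is to compute $n_i$ by stratifying the tuples $\omega$ according to $d:=\deg F_\omega$. For a fixed $d$ the leading coefficient is a non-zero scalar, and dividing $F_\omega$ by it leaves the root set unchanged; since there are $q-1$ choices of this scalar, every monic polynomial in the relevant family is produced by exactly $q-1$ tuples $\omega$. This is the source of the global factor $q-1$ in the answer. It then remains, for each stratum, to recognize the corresponding monic family and apply the appropriate counting theorem.

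For a stratum with $d\le m-1$ the vanishing of $\omega_{d+2},\dots,\omega_{m+1}$ together with the freeness of $\omega_1,\dots,\omega_d$ means that the normalized polynomial is an arbitrary monic polynomial of degree $d$; by Theorem \ref{thm:n-l=1} (the case $n-\ell=1$) the number of these with exactly $i$ roots is $N_i(x^d,d-1)$, so the stratum contributes $(q-1)N_i(x^d,d-1)$. For the top stratum $d=m+1$ one has $\omega_{m+1}\ne 0$, and the crucial effect of the jump is that the $x^m$-coefficient of the normalized monic polynomial is \emph{forced} to be $0$, while the coefficients of $x^{m-1},\dots,x^0$ stay free; this is exactly the case $n-\ell=2$ with vanishing subleading coefficient, so Theorem \ref{thm:a1=0} (with $b=0$, $n=m+1$) gives the contribution $(q-1)N_i(x^{m+1},m-1)$. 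Summing the strata then yields, for $0\le i\le m-1$,
$$n_i=(q-1)\sum_{d=i}^{m-1}N_i(x^d,d-1)+(q-1)N_i(x^{m+1},m-1),$$
while for $i=m,m+1$ only the top stratum can contribute and $n_i=(q-1)N_i(x^{m+1},m-1)$. Substituting the explicit value of $N_i(x^d,d-1)$ from Theorem \ref{thm:n-l=1} and pulling out $\binom{q}{i}$ rewrites the first sum as $(q-1)\binom{q}{i}\sum_{d=i}^{m-1}\sum_{k=0}^{d-i}(-1)^k\binom{q-i}{k}q^{d-i-k}$, which is the announced double sum.

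The computation is essentially mechanical once the stratification is in place; the genuinely delicate point is the bookkeeping forced by the jump. Because $x^m$ is missing, the strata are indexed by $\{0,\dots,m-1,m+1\}$ rather than $\{0,\dots,m+1\}$, and the top stratum must be routed through the $n-\ell=2$ formula (Theorem \ref{thm:a1=0}) rather than the $n-\ell=1$ formula; conflating the two would corrupt the degree-$(m+1)$ term, and I note that this top-stratum count is the common $N_i(x^{m+1},m-1)$ appearing in both branches. The remaining care is routine: tracking the scalar factor $q-1$ through each normalization, treating the degenerate constant case $d=0$ by the evident convention, confirming that the value $i=0$ (eigenvalue $0$) is handled consistently with the bipartite $\pm$ pairing, and verifying that each $n_i$ is non-zero so that the stated list is precisely the set of distinct eigenvalues.
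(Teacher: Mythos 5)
Your proof is correct and follows essentially the same route as the paper: stratify the tuples $\omega$ by $\deg F_\omega\in\{0,\dots,m-1,m+1\}$, count each stratum via Theorem \ref{thm:n-l=1} (degrees $\le m-1$, contributing the double sum) and Theorem \ref{thm:a1=0} with $b=0$, $n=m+1$ for the top stratum where the jump forces the $x^m$-coefficient to vanish, and carry the factor $q-1$ from scalar normalization through the spectral description of \cite{CLWWW}. Note that your top-stratum term $N_i(x^{m+1},m-1)$ in the branch $0\le i\le m-1$ agrees with the paper's own proof; the $N_i(x^{m+2},m-1)$ in the printed statement is a typo carried over from Theorem \ref{thm:JW2}.
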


\begin{proof}
Let $(\omega_1, \omega_2, \cdots, \omega_{m+1})\in \mathbb{F}_q^{m+1}$ and $f(X)=\omega_1+\omega_2X+\cdots\omega_mX^{m-1}+\omega_{m+1}X^{m+1}$.

In the case of $f=0$, $|\{u\in \mathbb{F}_q|f(u)=0\}|=q$. Thus, $JW_m^1(q)$ has $\pm q$ as its eigenvalues.

For any $0\leq i\leq m-1$, there exists a polynomial $f$ over $\mathbb{F}_q$ of degree at most $m+1\leq q-1$, which has exactly $i$ distinct roots in $\mathbb{F}_q$. 

For $i=m, m+1$,  it is easy to compute $N_1(m+1, i)>0$ by Theorem $\ref{thm:a1=0}$.  Then there exists a polynomial of degree $m+1$ which has exactly $i$ distinct roots in $\mathbb{F}_q$. Thus by Theorem 2.2 in $\cite{CLWWW}$, $JW_m^1(q)$ has $\pm\sqrt{iq},\ 0\leq i\leq m+1$ as its eigenvalues, and by Theorem  $\ref{thm:n-l=1}$ and Theorem $\ref{thm:a1=0}$, we obtain that the multiplicity of the eigenvalue $\pm\sqrt{iq}$ is 
$$(q-1)\left(\begin{array}{c}q\\ i\end{array}\right)\sum\limits_{d=i}^{m-1}\sum\limits_{k=0}^{d-i}(-1)^k\left(\begin{array}{c}q-i\\ k\end{array}\right)q^{d-i-k}+(q-1)N_i(x^{m+1},m-1),\ 0\leq i\leq m-1$$
and 
$$(q-1)N_i(x^{m+1},m-1),\ i=m,\ m+1.$$
\end{proof}

Similarly, we have the following theorem about the spectrum of $JW_m^2(q)$ by Theorem $\ref{thm:n-subset}$.

\begin{thm}\label{thm:JW2}
Suppose that $p$ is an odd prime number and $q=p^{2e}$.  If $1\leq m+2\leq q-1$, then we have

$1)$
$$\pm q,\ \pm\sqrt{(m-1)q},\ \pm\sqrt{(m-2)q},\ \cdots,\ \ \pm\sqrt{2q},\ \pm\sqrt{q},\ 0$$ are the distinct eigenvalues of $JW_m^2(q)$.
The multiplicity of the eigenvalue $\pm\sqrt{iq}$ is 
$$(q-1)\left(\begin{array}{c}q\\ i\end{array}\right)\sum\limits_{d=i}^{m-1}\sum\limits_{k=0}^{d-i}(-1)^k\left(\begin{array}{c}q-i\\ k\end{array}\right)q^{d-i-k}+(q-1)N_i(x^{m+2},m-1),\ 0\leq i\leq m-1.$$

$2)$ If $m\leq i\leq m+2$, then $\pm\sqrt{iq}$ are the distinct eigenvalues of $JW_m^2(q)$ if and only if $N_i(x^{m+2},m-1)>0$. Furthermore, the multiplicity of the eigenvalue $\pm\sqrt{iq}$ is 
$$(q-1)N_i(x^{m+2},m-1).$$

\end{thm}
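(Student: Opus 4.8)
The plan is to apply the eigenvalue reduction recalled in this section (Theorem 2.2 in \cite{CLWWW}). With $f_k(x)=x^{k-1}$ for $2\le k\le m$ and $f_{m+1}(x)=x^{m+2}$, the map $\sigma:u\mapsto(1,u,u^2,\ldots,u^{m-1},u^{m+2})$ is injective because its second coordinate is already $u$, so the hypothesis of that theorem holds. Hence the eigenvalues of $JW_m^2(q)$ are exactly $\pm\sqrt{qN_{F_\omega}}$ as $\omega=(\omega_1,\ldots,\omega_{m+1})$ ranges over $\mathbb{F}_q^{m+1}$, where
\[
F_\omega(u)=\omega_1+\omega_2u+\cdots+\omega_mu^{m-1}+\omega_{m+1}u^{m+2},
\]
$N_{F_\omega}=|\{u\in\mathbb{F}_q:F_\omega(u)=0\}|$, and the multiplicity of $\pm\sqrt{iq}$ equals $n_i=|\{\omega:N_{F_\omega}=i\}|$. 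The whole task thus reduces to computing $n_i$, and I would do this by splitting according to whether $\omega_{m+1}$ vanishes, exactly paralleling the proof of Theorem \ref{thm:JW1}.

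First I would treat the case $\omega_{m+1}\neq 0$. For each fixed nonzero value of $\omega_{m+1}$, scaling $F_\omega$ by $\omega_{m+1}^{-1}$ preserves its root set and produces a monic polynomial $x^{m+2}+a_{m-1}x^{m-1}+\cdots+a_1x+a_0$ whose coefficients at $x^{m+1}$ and $x^{m}$ are zero, since no $\omega$-variable multiplies those powers. As $(\omega_1,\ldots,\omega_m)$ runs over $\mathbb{F}_q^m$ this monic polynomial runs bijectively over precisely the polynomials counted by $N_i(x^{m+2},m-1)$, i.e.\ the $n-\ell=3$ case ($n=m+2$, $\ell=m-1$) of Theorem \ref{thm:a1=a2=0}. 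Therefore each of the $q-1$ admissible leading coefficients contributes $N_i(x^{m+2},m-1)$, for a total of $(q-1)N_i(x^{m+2},m-1)$. Since such a polynomial has degree $m+2$, this case can realize every $0\le i\le m+2$, which already yields part (2): for $m\le i\le m+2$ only this case contributes, the multiplicity is $(q-1)N_i(x^{m+2},m-1)$, and $\pm\sqrt{iq}$ occurs iff this is positive, i.e.\ iff $N_i(x^{m+2},m-1)>0$.

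Next I would treat the case $\omega_{m+1}=0$, where $F_\omega$ is an arbitrary polynomial of degree $\le m-1$ and so has at most $m-1$ roots (hence no contribution when $i\ge m$). For $0\le i\le m-1$ I would group these polynomials by their actual degree $d$ with $i\le d\le m-1$ and by leading coefficient $c\in\mathbb{F}_q^*$: writing $F_\omega=cg$ with $g$ monic of degree $d$, the root set is that of $g$, and by Theorem \ref{thm:n-l=1} the number of monic degree-$d$ polynomials with exactly $i$ roots is $N_i(x^d,d-1)=q^{d-i}\binom{q}{i}\sum_{k=0}^{d-i}(-1)^k\binom{q-i}{k}q^{-k}$. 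Summing over $i\le d\le m-1$ and over the $q-1$ choices of $c$ gives $(q-1)\sum_{d=i}^{m-1}N_i(x^d,d-1)$, and collecting powers of $q$ produces exactly the first displayed term $(q-1)\binom{q}{i}\sum_{d=i}^{m-1}\sum_{k=0}^{d-i}(-1)^k\binom{q-i}{k}q^{d-i-k}$. Adding the two cases gives the claimed multiplicity for $0\le i\le m-1$. The eigenvalue $\pm q$ comes from the single $\omega=0$, where $F_\omega\equiv 0$ and $N_{F_\omega}=q$; and each of $0,\pm\sqrt{q},\ldots,\pm\sqrt{(m-1)q}$ genuinely occurs because $m+2\le q-1$ leaves more than $m-1$ distinct field elements, so $\prod_{j=1}^{i}(x-c_j)$ is a valid degree-$\le m-1$ polynomial with exactly $i$ roots.

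The hard part will be bookkeeping rather than any deep difficulty. The two delicate points are: in the case $\omega_{m+1}\neq 0$, verifying that the scaled monic polynomial really has the constrained shape (vanishing $x^{m+1}$ and $x^{m}$ coefficients), so that the count is the restricted quantity $N_i(x^{m+2},m-1)$ of Theorem \ref{thm:a1=a2=0} and not the count of all monic degree-$(m+2)$ polynomials; and in the case $\omega_{m+1}=0$, keeping track of the fact that these polynomials are neither monic nor of fixed degree, which is exactly what forces the factor $q-1$ and the outer sum over $d$, followed by the routine simplification of the resulting double sum. I would also check the boundary index $i=0$ carefully, where the degree-zero nonzero constants enter the sum (at $d=0$) while the zero polynomial is correctly excluded, since it contributes to $\pm q$ rather than to $0$.
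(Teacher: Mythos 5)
Your proposal is correct and follows essentially the same route as the paper: the paper proves Theorem~\ref{thm:JW2} in one line by saying it is ``similar'' to Theorem~\ref{thm:JW1} via Theorem~\ref{thm:n-subset}, i.e.\ the eigenvalue reduction of Theorem 2.2 in \cite{CLWWW} followed by splitting on $\omega_{m+1}=0$ versus $\omega_{m+1}\neq 0$ and counting with Theorem~\ref{thm:n-l=1} and the $n-\ell=3$ results, which is exactly your decomposition. Your write-up is in fact more detailed than the paper's, making explicit the bookkeeping (the factor $q-1$ from leading coefficients, the sum over degrees $d$ in the $\omega_{m+1}=0$ case, and the $i=0$ boundary) that the paper leaves implicit.
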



\end{document}